\def\2{\text{\bf 2}}
\newcommand{\twoheaddownarrow}{{\rlap{\rlap{$\ $}\raise .25ex\hbox{$\downarrow$}}\raise-.25ex\hbox{$\downarrow$}}}
\newcommand{\twoheaduparrow}{{\rlap{\rlap{$\ $}\raise .25ex\hbox{$\uparrow$}}\raise-.25ex\hbox{$\uparrow$}}}
 \newcommand{\tbigcap}{\mathop{\textstyle \bigcap }}
 \newcommand{\tbigvee}{\mathop{\textstyle \bigvee }}
 \newcommand{\tbigwedge
}{\mathop{\textstyle \bigwedge }}
\newcommand{\dirvee}{\mathop{\setminus\kern-.85ex\nnearrow}}
\DeclareMathAccent{\widetriangle}{\mathord}{largesymbols}{"E6}
\newcommand{\po}{\ar@{}[dr]|{\text{\pigpenfont R}}}
\newcommand{\pb}{\ar@{}[dr]|{\text{\pigpenfont J}}}
\newtheorem{theorem}{Theorem}[section]
\newtheorem{proposition}[theorem]{Proposition}
\newtheorem{lemma}[theorem]{Lemma}
\newtheorem{corollary}[theorem]{Corollary}
\theoremstyle{definition}
\newtheorem{definition}[theorem]{Definition}
\newtheorem{example}[theorem]{Example}
\theoremstyle{remark}
\newtheorem{remark}[theorem]{Remark}
\begin{document}

\title{On Isbell's Density Theorem\\
for bitopological pointfree spaces II}

\keywords{Frame,
locale,
bitopology,
biframe,
subspace}

\subjclass{06D22,
54B05, 54E55}

\thanks{The second named author acknowledges support from the Basque Government (grant IT1483-22).}

\author[M.\thinspace A. Moshier]{M. Andrew Moshier}
\address[M. Andrew Moshier]{CECAT\\ Department of Mathematics \& Computer Science\\ Chapman University\\ Orange CA 92866\\ U.S.A.}
\email{moshier@chapman.edu}

\author[I. Mozo~Carollo]{Imanol Mozo~Carollo}
\address[Imanol Mozo~Carollo]{Department of Applied Economics\\ University of the Basque Country UPV/EHU\\ 20018 Donostia\\ Spain}
\email{imanol.mozo@ehu.eus}

\author[J. Walters-Wayland]{Joanne Walters-Wayland}
\address[Joanne Walters-Wayland]{CECAT\\ Department of Mathematics \& Computer Science\\ Chapman University\\ Orange CA 92866\\ U.S.A.}
\email{joanne@waylands.com}

%

\date{\today}

\setstretch{1.3}


\begin{abstract}
With the aim of studying subspaces in pointfree bitopology, we characterize extremal epimorphism in biframes and show that a smallest dense one always exists, providing an analogue of Isbell's Density Theorem. Further we study the functoriality of assigning to each biframe its lattice of subbilocales and its smallest dense subbilocale.
\end{abstract}
\maketitle

\section{Introduction}

This paper is a sequel to a previous work \cite{MMW20} wherein we addressed the study of subspaces in pointfree bitopology in terms of $d$-frames \cite{JM06}. We now turn our attention towards a more classical approach to bitopological pointfree spaces, namely, \emph{biframes} \cite{BBH83}. 

The reader will be familiar with the basics. 
The study of pointfree topology can be understood as the study of the category $\mathsf{Frm}$ of frames---complete lattices in which finite meets distribute over arbitrary joins---and frame homomorphisms---maps between frames preserving finite meets and arbitrary joins---with the caveat that the relation between frames and classical topology is contravariant.

The most studied pointfree approach to bitopological spaces was presented by Banaschewski, Brummer and Hardie in \cite{BBH83}. There pointfree bitopological spaces are \emph{biframes}. Biframes consist of three frames $L=(L_0,L_1,L_2)$  where $L_1$ and $L_2$ are subframes of $L_0$ and their union $L_1\cup L_2$ forms a subbasis of $L_0$. They form the category $\mathsf{BiFrm}$ of biframes together with biframe homomorphisms $f\colon L\to M$, which are given by frame homomorphisms $f_0\colon L_0\to M_0$ such that they restrict to frame homomorphisms $f_0|_{L_i}=f_i\colon L_i \to M_i$ for $i=1,2$. For simplicity, by a slight abuse of notation, we may use $f$ to denote the underlying frame homomorphism $f_0$ if there is no risk of confusion.

In \cite{MMW20} we provided a bitopological version of Isbell's Density Theorem \cite{I72} for $d$-frames. Isbell's celebrated result states that every pointfree space $L$ has a smallest dense ``subspace''.  This has to do with what we mean by ``subspace'' in this context.  In this setting, a pointfree topological embedding translates to an extremal epimorphism in $\mathsf{Frm}$ or, equivalently, to extremal monomorphism in its dual category $\mathsf{Loc}$, the category of \emph{locales}. ``Subspaces'' in this context means \emph{sublocales}, i.e., subobjects in $\mathsf{Loc}$ (extremal subobjects, strictly speaking). Recall that a frame homomorphism $f\colon L\to M$ is said to be \emph{dense} if $f(a)=0$ implies $a=0$. The collection of all regular elements $a=a^{\ast\ast}$ of a frame $L$ forms a complete Boolean algebra, denoted by $\mathfrak{B}(L)$, the \emph{Booleanization} of $L$. The smallest dense sublocale of $L$ is represented by the extremal epimorphism $\beta_L\colon L\to\mathfrak{B}(L)$ that maps each element $a$ to its double pseudocomplement $a^{\ast\ast}$.

Interestingly, the concepts of least dense sublocale and of Booleanization coincide for frames, but this is not the case in pointfree bispaces in general, as  Suarez shows \cite{S22} in the context of  d-frames. In this paper we focus  on the notion of  least dense  ``subspace''.  We provide a biframe version of Isbell's Density theorem and show that each biframe has a smallest dense extremal epimorphism and explore how to make our construction functorial by restricting the  hom-sets (Section \ref{IsbellsDensity}). With this aim in mind, we first characterize extremal epimorphisms and establish an extremal epi-mono factorization system in $\mathsf{BiFrm}$ (Section \ref{extepischar}). We define \emph{subbilocales} of a biframe $L$ as the equivalence classes of extremal epimorphisms with $L$ as domain. Then we show that the poset $\mathcal{S}(L)$ of subbilocales of $L$  is a complete lattice (Section \ref{sectionlatq}), although it is not distributive in general, as it is shown in a counterexample in Section \ref{sectioninducedsubs}. In Section \ref{sectionfunctorQ} we show that the assignment $L\mapsto \mathcal{S}(L)$ is functorial and extends the functor that maps frames to their coframes of sublocales.

\section{Background and notation}

\subsection{Posets}
For any subset $A$ of a partially ordered set $(P,\le)$, we will denote by $
 {\tbigvee^P} A$ (resp. $
 {\tbigwedge^P} A$) the supremum (resp. infimum) of $A$ in $P$ in case it exists (we shall omit the superscript if it is clear from the context).

\subsection{Categories}
For a coproduct $A\oplus B$ we will denote the coproduct injections by $\iota_A\colon A\to A\oplus B$ and $\iota_B\colon B\to A\oplus B$. For any morphisms $f\colon A\to C$ and $g\colon B\to D$, we will denote by $f\oplus g$ the unique frame homomorphism $A\oplus B\to C\oplus D$ that makes
 \[
 \xymatrix@R+1pc@C+2pc{A\ar[d]_{f} \ar[r]^-{\iota_{A}}&A\oplus B\ar[d]_{f\oplus g} &\ar[l]_-{\iota_{B}} B \ar[d]^{g}\\
C \ar[r]_-{\iota_{C}}& C\oplus D &\ar[l]^-{\iota_{D}} D}
\]
commute.

Given an object $B$, we say that a family of morphisms $\{f_i\colon A_i\to B\}_{i\in I}$ is \emph{jointly epimorphic} (or \emph{jointly epic} for short) if given any two morphisms $g, h\colon B\to C$ such that $g\cdot f_i=h\cdot f_i$ for all $i\in I$, it follows that $g=h$.

\subsection{Frames}

For general notions and results concerning frames we refer to Johnstone \cite{PJ82} or the more recent
Picado-Pultr \cite{PP12}. The latter is particularly useful for details about sublocales. Below, we provide a brief survey of the background required for this paper.

Since in any category with pushouts and pullbacks, extremal epimorphisms are stable under pushouts, and extremal epimorphisms in $\mathrm{Frm}$ are precisely onto frame homomorphism, it is the case that pushouts in $\mathsf{Frm}$ preserve onto frame homomorphisms.

A \emph{frame congruence}
 in a frame $L$ is an equivalence relation $\mathcal{R}$ respecting all joins and finite meets. Given a frame congruence $\mathcal{R}$ we can define the \emph{quotient frame}\index{quotient!frame}\index{frame!quotient $\sim$} $L/\mathcal{R}$ just as in algebraic fashion: the elements are the $\mathcal{R}$-classes
\[
\langle a\rangle=\{b\in L\mid (b,a)\in\mathcal{R}\}.
\]
 There is an extremal epimorphism (onto frame homomorphism) $p\colon L\to L/\mathcal{R}$ given by $a\mapsto \langle a\rangle$ for each $a\in L$. Further, if $h\colon L\to M$ is a frame homomorphism such that  $h(a)=h(b)$ for all $a,b\in L$ satisfying $(a,b)\in \mathcal{R}$, then there is a unique frame homomorphism $\tilde{h}\colon L/\mathcal{R}\to M$ such that $\tilde{h}\cdot p=h$.

We will denote by $\mathcal{C}(L)$ the set of all frame congruences of $L$. Ordered by inclusion, $\mathcal{C}(L)$ is a frame and infimum is given by intersection. This fact allows us to define the \emph{frame congruence generated}\index{congruence!generated by a relation} by $R$ for any $R\subseteq L\times L$,
\[
[R]=\tbigwedge\{\mathcal{R}\in\mathcal{C}(L)\mid R\subseteq \mathcal{R}\}=\tbigcap\{\mathcal{R}\in\mathcal{C}(L)\mid R\subseteq\mathcal{R}\}
\]
the least frame congruence containing $R$. Further given a frame homomorphism $h\colon L\to M$ such that $h(a)=h(b)$ for all $(a,b)\in R$ one has that $h(c)=h(d)$ for all $(c,d)\in [R]$.

\subsection{Biframes}

Given a biframe $L=(L_0,L_1,L_2)$, we will call $L_0$ the \emph{ambient} frame  and $L_1$ and $L_2$  the \emph{component} frames of $L$. Other  authors (see \cite{S92}, for example) call them \emph{total}, \emph{first}  and \emph{second parts}, respectively,  but  we will stick to our terminology as we find it useful to have a single  term, component frame, to refer to either of the frames $L_1$ and $L_2$. If we identify each component frame $L_i$ with its isomorphic image under the coproduct injection $\iota_i\colon L_i\to L_1\oplus L_2$, then $(L_1\oplus L_2, L_1, L_2)$ is a biframe. Furthermore, for any biframe $L$, it is immediate to see that the ambient frame $L_0$ has to be a quotient of the coproduct $L_1\oplus L_2$ of its ambient frame. We will denote by $q_L$ the quotient map $L_1\oplus L_2\to L_0$ such that $e_i=\iota_{L_i}\cdot q_L$ for $i=1,2$.

Note that $L_1$ and $L_2$ together generating $L_0$ implies that the inclusion maps $e_{L_1}$ and $e_{L_2}$ are jointly epic. In particular, the coproduct injections $\iota_{L_i}\colon L_i\to L_1\oplus L_2$ are jointly epic.

A biframe homomorphism $h$ is monic in $\mathsf{BiFrm}$ iff $h_1$ and $h_2$ are both one-one and  $h$ being one-one implies that $h$ is a monic, but not conversely  \cite{S92}.

\section{Extremal epimorphisms in $\mathsf{BiFrm}$}\label{extepischar}

In this section, we characterize extremal epimorphisms and establish an extremal epi-mono factorization system for biframes. To this end, let $f\colon  L\to M$ be a biframe homomorphism and let $e_i$ denote the inclusion maps of $f[L_i]$ into $M_0$. There is a unique frame homomorphism $g\colon f[L_1]\oplus f[L_2]\to M_0$ making the following diagram commute:
\[
\xymatrix@R+1.pc@C+1.pc{
f[L_1]\ar[r]^-{\iota_{f[L_1]}}\ar[dr]_{e_1}&f[L_1]\oplus f[L_2]\ar@{.>}[d]^g&f[L_2]\ar[l]_-{\iota_{f[L_2]}}\ar[dl]^{e_2}\\
&M_0&
}
\]
For $i=1,2$, we have
\[
\begin{aligned}
g\cdot f_1\oplus f_2\cdot \iota_{L_i}&=g\cdot\iota_{f[L_i]}\cdot f_i\\
&=e_i\cdot f_i\\
&=f\cdot q_L\cdot \iota_{L_i}.
\end{aligned}
\]
Therefore $g\cdot f_1\oplus f_2=f\cdot q_L$, as $\iota_{L_1}$ and $\iota_{L_2}$ are jointly epic. 
Hence, there is a unique $e\colon f(L)\to M_0$ such that 
\[
\xymatrix{
L_1\oplus L_2\ar[r]^-{q_L}\ar[d]_{f_1\oplus f_2} & L_0\ar[d]^{\overline{f}}\ar@/^1.5pc/[ddr]^{f}&\\
f[L_1]\oplus f[L_2]\ar@/_1.5pc/[drr]_g\ar[r]_-{\overline{q_L}}&f(L)\ar@{.>}[dr]^e&\\
&&M_0}
\]
commutes, 
where the inner square is a pushout diagram.

Consequently $e\cdot \overline{q_L}\cdot\iota_{f[L_i]}=g\cdot \iota_{f[L_i]}=e_i$. Since $e_i$ is one-one, so is $\overline{q_L}\cdot\iota_{f[L_1]}$. Therefore $f[L_i]$ embeds into $f(L)$. Accordingly, we can identify $f[L_i]$ with its isomorphic image in $f(L)$. Furthermore, as pushouts preserve onto frame homomorphisms, $\overline{q_L}$ is onto and consequently $f[L_1]\cup f[L_2]$ forms a subbasis of $f(L)$. Hence $(f(L),f[L_1],f[L_2])$ is a biframe.  The following commutative diagram depicts our construction:
\[
\xymatrix{
L_1\ar[rr]^{\iota_{L_1}}\ar[dd]_{f_1}&&L_1\oplus L_2\ar[dr]^{q_L}\ar[dd]_{f_1\oplus f_2}&&L_2\ar[ll]_{\iota_{L_2}}\ar[dd]^{f_2}\\
&&&L_0\ar[dd]\ar[dd]_(0.25){\overline{f}}\ar@/^2.3pc/[ddd]^f &\\
f[L_1]\ar[rr]^-{\iota_{f[L_1]}}\ar@/_1.5pc/[ddrrr]_{e_1}&&f[L_1]\oplus f[L_2]\ar[dr]^{\overline{q_L}}\ar@/_1pc/[ddr]_g&&f[L_2]\ar[ll]|!{[d];[l]}\hole_(0.6){\iota_{f[L_2]}}\ar@/^1pc/[ddl]^{e_2}\\
&&&f(L)\ar[d]^(0.4){e}&\\
&&&M_0&
}
\]
As
\[
\overline{f}\cdot q_L\cdot\iota_{L_i}=f_i\cdot\iota_{f[L_i]}\cdot \overline{q_L}
\]
we have that $\overline{f}$ restricts to a frame homomorphism
\[
 \overline{f}|_{L_i}\colon L_i\to f[L_i].
 \]
Similarly, as
\[
e\cdot \overline{q_L}\cdot\iota_{f[L_i]}=e_i,
\]
the restriction
\[
e|_{f[L_i]}\colon f[L_i]\to M_i
\]
is a frame homomorphism. In summary, we have the following result.

\begin{proposition}\label{factor}
Using the notation above, any biframe homorphism $f\colon L\to M$ factors as follows:
\[
\xymatrix{
L\ar[rr]^{f}\ar[dr]_{\overline{f}}&&M\\
&(f(L),f[L_1],f[L_2])\ar[ur]_e&
}
\]
\end{proposition}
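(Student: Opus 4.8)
The plan is to assemble the factorization from the pieces already constructed in the preamble, verifying the three commutativities that make the triangle a genuine biframe factorization. First I would note that the biframe $(f(L),f[L_1],f[L_2])$ has already been shown to be a well-defined biframe: $\overline{q_L}$ is onto because pushouts preserve onto frame homomorphisms, so $f[L_1]\cup f[L_2]$ is a subbasis of $f(L)$, and each $f[L_i]$ embeds into $f(L)$ since $\overline{q_L}\cdot\iota_{f[L_i]}$ was shown to be one-one (it composes with $e$ to give the embedding $e_i$). So the objects and the candidate maps $\overline f$ and $e$ are all in hand.

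Next I would check that $\overline f$ is a biframe homomorphism $L\to (f(L),f[L_1],f[L_2])$ and that $e$ is a biframe homomorphism $(f(L),f[L_1],f[L_2])\to M$. For $\overline f$, the underlying frame map $\overline f\colon L_0\to f(L)$ exists by the pushout; the displayed identity $\overline f\cdot q_L\cdot\iota_{L_i}=f_i\cdot\iota_{f[L_i]}\cdot\overline{q_L}$ shows $\overline f$ sends the generators of $L_i$ (the image of $\iota_{L_i}$ under $q_L$) into $f[L_i]$, and since $q_L$ restricted to $L_i$ is onto $L_i$ this means $\overline f[L_i]\subseteq f[L_i]$, giving the required restriction $\overline f|_{L_i}\colon L_i\to f[L_i]$. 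Symmetrically, $e\cdot\overline{q_L}\cdot\iota_{f[L_i]}=e_i$ exhibits $e$ as carrying $f[L_i]$ into $M_i$ (indeed $e|_{f[L_i]}=e_i$ viewed with codomain $M_i$), so $e$ is a biframe homomorphism.

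Finally I would verify $e\cdot\overline f=f$ as biframe homomorphisms, i.e. on the ambient frames $e\cdot\overline f=f\colon L_0\to M_0$. This follows by the joint epimorphism of $q_L$ (equivalently, of $e_{L_1},e_{L_2}$): precomposing with $q_L\cdot\iota_{L_i}$ gives $e\cdot\overline f\cdot q_L\cdot\iota_{L_i}=e\cdot f_i\cdot\iota_{f[L_i]}\cdot\overline{q_L}=e\cdot\overline{q_L}\cdot\iota_{f[L_i]}\cdot f_i=e_i\cdot f_i=f\cdot q_L\cdot\iota_{L_i}$, using the identities displayed in the preamble; since $\{\iota_{L_1},\iota_{L_2}\}$ is jointly epic and $q_L$ is epic (onto), $e\cdot\overline f\cdot q_L=f\cdot q_L$ forces $e\cdot\overline f=f$. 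I expect no serious obstacle here: the whole proof is essentially bookkeeping on the large commutative diagram already drawn, and the only point requiring care is being explicit that "$\overline f$ restricts to $L_i\to f[L_i]$" genuinely uses surjectivity of $q_L|_{L_i}$ onto $L_i$, and dually that the component conditions for $e$ and the ambient condition $e\cdot\overline f=f$ are all consequences of the joint-epi property of the coproduct injections together with the pushout's universal property.
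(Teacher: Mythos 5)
Your proof is correct and takes essentially the same route as the paper, whose ``proof'' of this proposition is precisely the construction carried out in the preceding paragraphs (the pushout of $f_1\oplus f_2$ along $q_L$, the induced map $e$ from its universal property, and the verification that $\overline{f}$ and $e$ restrict to the component frames). The only blemish is the ill-typed intermediate expression $e\cdot f_i\cdot\iota_{f[L_i]}\cdot\overline{q_L}$, inherited from a typo in the paper's displayed identity, which you immediately correct to $e\cdot\overline{q_L}\cdot\iota_{f[L_i]}\cdot f_i$.
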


For a more concrete description of the factorization above, we have the following result.

\begin{proposition}
Again, using the notation above,
 let $\mathcal{R}_f$ be the least frame congruence   of $f[L_1]\oplus f[L_2]$ that contains 
\[
\left(\tbigvee_{i\in I}f(a_i)\oplus f(b_i),\tbigvee_{j\in J}f(c_j)\oplus f(d_j)\right)
\]
whenever
\[
\tbigvee^{L_0}_{i\in I}a_i\wedge b_i=\tbigvee^{L_0}_{j\in J}c_j\wedge d_j
\]
with $a_i, c_i\in L_1$ and $b_j, d_j\in L_2$ for for all $i\in I$ and all $j\in J$. Then one has that

 \[f(L)\simeq f[L_1]\oplus f[L_2]/\mathcal{R}_f.\]
\end{proposition}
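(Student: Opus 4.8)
The plan is to identify $f(L)$ explicitly as the pushout from Proposition~\ref{factor} and to show that the congruence $\mathcal R_f$ on $f[L_1]\oplus f[L_2]$ is precisely the kernel congruence of the map $\overline{q_L}\colon f[L_1]\oplus f[L_2]\to f(L)$. Since $\overline{q_L}$ is an onto frame homomorphism, once we know its kernel congruence is $\mathcal R_f$ we get $f(L)\simeq f[L_1]\oplus f[L_2]/\mathcal R_f$ by the universal property of the quotient frame recalled in the Background section. So the whole argument reduces to computing that kernel congruence.

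First I would recall that $f(L)$ is the pushout of $q_L$ along $f_1\oplus f_2$, so by the standard description of pushouts of frames as quotients, $f(L)$ is the quotient of $f[L_1]\oplus f[L_2]$ by the congruence generated by the image under $f_1\oplus f_2$ of the kernel congruence of $q_L$. Thus I need two things: (i) a generating set for the kernel congruence of $q_L\colon L_1\oplus L_2\to L_0$, and (ii) the observation that applying $f_1\oplus f_2$ to such a generating set yields exactly the generators of $\mathcal R_f$ described in the statement. For (i), the key point is that every element of $L_1\oplus L_2$ is a join of elements of the form $a\oplus b$ with $a\in L_1$, $b\in L_2$, and under $q_L$ such an element maps to $e_{L_1}(a)\wedge e_{L_2}(b)=a\wedge b$ computed in $L_0$; hence $q_L\bigl(\tbigvee_i a_i\oplus b_i\bigr)=\tbigvee^{L_0}_i a_i\wedge b_i$. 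So the kernel congruence of $q_L$ is generated by the pairs $\bigl(\tbigvee_{i}a_i\oplus b_i,\tbigvee_{j}c_j\oplus d_j\bigr)$ with $\tbigvee^{L_0}_i a_i\wedge b_i=\tbigvee^{L_0}_j c_j\wedge d_j$ — indeed this relation already \emph{is} a congruence, being the kernel of a frame homomorphism, and it is clearly the smallest one identifying all such pairs.

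Then for (ii) I would apply $f_1\oplus f_2$ to these generating pairs: since $(f_1\oplus f_2)(a\oplus b)=f_1(a)\oplus f_2(b)=f(a)\oplus f(b)$ and $f_1\oplus f_2$ preserves joins, the generating pair $\bigl(\tbigvee_i a_i\oplus b_i,\tbigvee_j c_j\oplus d_j\bigr)$ of $\ker q_L$ maps to the pair $\bigl(\tbigvee_i f(a_i)\oplus f(b_i),\tbigvee_j f(c_j)\oplus f(d_j)\bigr)$, which is exactly a generator of $\mathcal R_f$ whenever $\tbigvee^{L_0}_i a_i\wedge b_i=\tbigvee^{L_0}_j c_j\wedge d_j$. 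Conversely every generator of $\mathcal R_f$ arises this way. Hence the congruence on $f[L_1]\oplus f[L_2]$ generated by $(f_1\oplus f_2)[\ker q_L]$ coincides with $\mathcal R_f$, and the pushout description gives $f(L)\simeq f[L_1]\oplus f[L_2]/\mathcal R_f$.

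The main obstacle I expect is pinning down precisely the ``congruence generated by the image of a congruence'' description of the pushout and making sure the two quotient constructions agree — that is, verifying carefully that $f(L)$, defined abstractly as a pushout, really is the quotient of $f[L_1]\oplus f[L_2]$ by $[(f_1\oplus f_2)[\ker q_L]]$, rather than by something larger. Concretely one checks that $\overline{q_L}$ identifies all pairs in $(f_1\oplus f_2)[\ker q_L]$ (immediate from commutativity of the pushout square, since $\overline{q_L}\cdot(f_1\oplus f_2)=\overline{f}\cdot q_L$ kills $\ker q_L$), that it is onto (already established in the text, as pushouts preserve onto maps), and that it is universal among frame homomorphisms out of $f[L_1]\oplus f[L_2]$ collapsing those pairs — this last being a routine diagram chase using the pushout property of $f(L)$ together with the fact (from the Background) that a homomorphism killing a generating set of a congruence kills the whole congruence. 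Everything else is a direct join-preservation computation with the coproduct injections.
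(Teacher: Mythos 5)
Your proposal is correct and in substance matches the paper's proof: both arguments come down to verifying that $f[L_1]\oplus f[L_2]/\mathcal{R}_f$ satisfies the universal property of the pushout of $q_L$ along $f_1\oplus f_2$, with your identification of $\mathcal{R}_f$ as the congruence generated by the image of $\ker q_L$ playing exactly the role of the paper's well-definedness check for the map $g\colon L_0\to f[L_1]\oplus f[L_2]/\mathcal{R}_f$ sending $\tbigvee_i a_i\wedge b_i$ to the class of $\tbigvee_i f(a_i)\oplus f(b_i)$. The only difference is organizational: you route the universal-property verification through the general ``pushout of a quotient is the quotient by the generated image congruence'' lemma, while the paper carries out that verification directly for this instance.
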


\begin{proof}
Let $q$ be the quotient map
\[
q\colon f[L_1]\oplus f[L_2]\to f[L_1]\oplus f[L_2]/\mathcal{R}_f
\] 
determined by $\mathcal{R}_F$. We can a define a frame homomorphism 
\[
g\colon L_0\to f[L_1]\oplus f[L_2]/\mathcal{R}_f
\] 
by mapping  $\tbigvee_{i\in I}a_i\wedge b_i$ to the equivalence class of $\tbigvee_{i\in I}f(a_i)\oplus f(b_i)$. Note that $g$ is well-defined by the very definition of $\mathcal{R}_f$ and it is immediate to check that $g$ preserves joins and finite meets. For any $a\in L_1$, one has that
\[
g(q_L(a\oplus 1))=g(a\wedge 1)=q(f(a)\oplus 1)=q((f_1\oplus f_2)(a\oplus 1)).
\]
Analogously, for any $b\in L_2$,
\[
g(q_L(1\oplus b))=q( (f_1\oplus f_2)(1\oplus b)).
\]
We conclude that $g\cdot q_L=q\cdot f_1\oplus f_2$

Now we will check that $f[L_1]\oplus f[L_2]/\mathcal{R}_f$  satisfies the universal property with $q$ and $g$. If $m\cdot q_L=n\cdot f_1\oplus f_2$ for some frame homomorphisms $m\colon L_0\to N$ and $n\colon f[L_1]\oplus f[L_2]\to N$, then $n$ has to turn the relation $\mathcal{R}_f$ into the identity in $M$. Hence there exists
\[
p\colon f[L_1]\oplus f[L_2]/\mathcal{R}_f\to M
\]
 such that $p\cdot q=n$. Further, $p\cdot g=m$ follows from the fact that $q_L$ is epic and
\[
p\cdot g\cdot q_L=p\cdot q\cdot f_1\oplus f_2=n\cdot f_1\oplus f_2=m\cdot q_L.\qedhere
\]
\end{proof}

\begin{proposition}\label{extremalchar}
Extremal epimorphisms in $\mathsf{BiFrm}$ are precisely the biframe homomorphisms $f\colon L\to M$ such that $f_1$ and $f_2$ are extremal epimorphisms and $f$, as a frame homomorphism, is the pushout of $f_1\oplus f_2$ along $q_L$ in $\mathsf{Frm}$.
\end{proposition}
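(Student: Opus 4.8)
The plan is to build the argument on the concrete extremal-epi--mono factorization $f=e\cdot\overline{f}$ of Proposition~\ref{factor}, where $\overline{f}\colon L\to(f(L),f[L_1],f[L_2])$ and $e\colon(f(L),f[L_1],f[L_2])\to M$. The preliminary observation, valid for any biframe homomorphism $f$, is that $e$ is monic in $\mathsf{BiFrm}$: by construction $e\cdot\overline{q_L}\cdot\iota_{f[L_i]}=e_i$ is the inclusion $f[L_i]\hookrightarrow M_0$, so once $f[L_i]$ is identified with its copy inside $f(L)$ the restriction $e|_{f[L_i]}\colon f[L_i]\to M_i$ is simply the inclusion $f[L_i]\subseteq M_i$ and hence one-one; by the criterion recalled earlier (a biframe homomorphism is monic iff both of its component maps are one-one), $e$ is monic.

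\medskip\noindent\emph{Necessity.} If $f$ is an extremal epimorphism, then $f=e\cdot\overline{f}$ with $e$ monic immediately forces $e$ to be an isomorphism. In particular each $e|_{f[L_i]}\colon f[L_i]\hookrightarrow M_i$ is an isomorphism, so $f[L_i]=M_i$ and every $f_i\colon L_i\to M_i$ is onto, i.e.\ an extremal epimorphism in $\mathsf{Frm}$. Transporting along the isomorphism $e$ the pushout square produced in the construction (the one with legs $q_L$, $f_1\oplus f_2$, $\overline{q_L}$, $\overline{f}$), and observing that $e\cdot\overline{q_L}=q_M$ --- both sides compose with $\iota_{M_i}$ to the inclusion $M_i\hookrightarrow M_0$, and the $\iota_{M_i}$ are jointly epic --- one gets that the square with legs $q_L$, $f_1\oplus f_2$, $q_M$, $f_0$ is a pushout; that is, $f_0$ is the pushout of $f_1\oplus f_2$ along $q_L$ in $\mathsf{Frm}$.

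\medskip\noindent\emph{Sufficiency.} Assume $f_1,f_2$ are onto and $f_0$ is the pushout of $f_1\oplus f_2$ along $q_L$ in $\mathsf{Frm}$. Since $q_L$ is onto (it is a quotient map) and $f_1\oplus f_2$ is onto (a coproduct of onto frame homomorphisms is onto), and pushouts in $\mathsf{Frm}$ preserve onto maps, $f_0$ is onto, hence epic in $\mathsf{Frm}$; as biframe homomorphisms are determined by their ambient components, $f$ is epic in $\mathsf{BiFrm}$. Now suppose $f=m\cdot g$ with $m\colon P\to M$ monic in $\mathsf{BiFrm}$; we must show $m$ is an isomorphism. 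From $f_i=m_i\cdot g_i$ with $f_i$ onto we get $m_i$ onto, and monicity of $m$ gives $m_i$ one-one, so $m_1,m_2$ (being bijective frame homomorphisms) are isomorphisms and so is $m_1\oplus m_2$. The commuting square with legs $q_L$, $g_1\oplus g_2$, $q_P$, $g_0$, together with the identity $(m_1\oplus m_2)\cdot(g_1\oplus g_2)=f_1\oplus f_2$, yields a cocone on the span defining the pushout $M_0$, hence a unique frame homomorphism $u\colon M_0\to P_0$ with $u\cdot f_0=g_0$ and $u\cdot q_M=q_P\cdot(m_1\oplus m_2)^{-1}$. Testing against the two pushout legs gives $m_0\cdot u=\mathrm{id}_{M_0}$, and precomposing $u\cdot m_0$ with the epimorphism $q_P$ gives $u\cdot m_0=\mathrm{id}_{P_0}$. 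Thus $m_0$ is an isomorphism, and therefore so is $m=(m_0,m_1,m_2)$, proving that $f$ is an extremal epimorphism.

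\medskip
I expect the only genuine obstacle to be the last step of the sufficiency part, namely extracting the inverse $u$ of $m_0$ from the pushout property: one has to carry along the two squares attached to $f$ and to $g$ at once, transport one across the isomorphism $m_1\oplus m_2$, and verify $m_0\cdot u=\mathrm{id}$ and $u\cdot m_0=\mathrm{id}$ by testing against the appropriate (jointly) epic maps. This is the one place where the two hypotheses on $f$ are used in essential combination; everything else is bookkeeping with Proposition~\ref{factor}, the monicity criterion, and the fact that a bijective frame homomorphism is an isomorphism.
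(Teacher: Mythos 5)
Your proof is correct and follows essentially the same route as the paper: necessity via the factorization $f=e\cdot\overline{f}$ of Proposition~\ref{factor} with $e$ monic by the componentwise one-one criterion, and sufficiency via the observation that $m_1,m_2$ are bijective (hence isomorphisms) followed by the same diagram chase with $q_P\cdot(m_1\oplus m_2)^{-1}$ against the pushout's universal property. The only differences are cosmetic --- you derive epicness of $f$ from ontoness of $f_0$ rather than from joint epicness of the $e_{M_i}$, and you spell out explicitly the mediating map $u$ and the two identities $m_0\cdot u=\mathrm{id}$, $u\cdot m_0=\mathrm{id}$ that the paper leaves implicit in its appeal to the universal property.
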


 \begin{proof}
 Let $f\colon L\to M$ be such that $f_1$ and $f_2$ are onto and $f$ is the pushout of $f_1\oplus f_2$ along $q_L$ in $\mathsf{Frm}$. In order to check  that $f$ is an epimorphism, let $g,h\colon M\to N$ be biframe homomorphisms such $g\cdot f=h\cdot f$. Then, for $i=1,2$, as $g_i\cdot f_i=h_i\cdot f_i$  and $f_i$ is epic, we have that $g_i=h_i$. As $e_{M_1}$ and $e_{M_2}$ are jointly epic and, for $i=1,2$
 \[
 g\cdot e_{M_i}=e_{N_i}\cdot g_i=e_{N_i}\cdot h_i=h\cdot e_{M_i},
\]
we conclude that $g=h$.

For extremality, let $f=m\cdot h$ with $m\colon N\to M$ monic. As $f_i$ is epic, so is $m_i$. Thus, $m_i$ is a bijective frame homomorphism, that is, an isomorphism. Then we have, for $i=1,2$,
\[
\begin{aligned}
q_N\cdot m_1^{-1}\oplus m_2^{-1}\cdot f_1\oplus f_2\cdot\iota_{L_i}&=q_N\cdot (m_1^{-1}\cdot f_1)\oplus (m_1^{-1}\oplus f_2)\cdot \iota_{L_i}\\
&=q_N\cdot h_1\oplus h_2\cdot \iota_{L_i}\\
&=q_N\cdot\iota_{N_i}\cdot h_i\\
&=e_{N_i}\cdot h_i\\
&=h\cdot e_{L_i}\\
&=h\cdot q_L\cdot\iota_{L_i}.
\end{aligned}
\]
In consequence,
\[
q_N\cdot m_1^{-1}\oplus m_2^{-1}\cdot f_1\oplus f_2=h\cdot q_L,
\]
as coproduct injections are jointly  epic.
Furthermore, as, for $i=1,2$,
\[
\begin{aligned}
m\cdot q_N\cdot m_1^{-1}\oplus m_2^{-1}\cdot\iota_{M_i}&=m\cdot q_N\cdot\iota_{N_i}\cdot m_{i}^{-1}\\
&=m \cdot e_{N_i}\cdot m_i^{-1}\\
&=e_{M_i}\cdot m_i\cdot m_i^{-1}\\
&=e_{M_i}=q_M\cdot \iota_{M_i},
\end{aligned}
\]
it is the case that $m\cdot q_N\cdot m_1^{-1}\oplus m_2^{-1}=q_M$. Therefore
\[
\xymatrix{
L_1\oplus L_2\ar[r]^-{q_L}\ar[d]_{f_1\oplus f_2} & L_0\ar[d]^{h}\ar@/^1.5pc/[ddr]^{f}&\\
f[L_1]\oplus f[L_2]\ar@/_1.5pc/[drr]_{q_M}\ar[r]_-{q_N\cdot m_1^{-1}\oplus m_2^{-1}}&N_0\ar@{.>}[dr]^{m}&\\
&&M_0}
\]
commutes. In conclusion, $m$ is an isomorphism, as the inner square satisfies the universal property of the pushout of $q_L$ and $f_1\oplus f_2$.

 On the other hand, if $f$ does not satisfy any of the conditions, the decomposition $f=e\cdot \overline{f}$ from Proposition \ref{factor}, with $e$ a non-isomorphic mono, shows that $f$ is not an extremal epimorphism.
\end{proof}
 
 \section{The lattice of subbilocales of a biframe}\label{sectionlatq}

There is a natural preorder for extremal epimorphisms: for any $f\colon L\to S$ and $g\colon L\to T$, $f\lesssim g$ if there exists a homomorphism $h\colon T\to S$ such that $h\cdot g=f$. Of course, this is equally valid for the category $\mathsf{BiFrm}$ as well as for $\mathsf{Frm}$ and, in fact, for any category. For any $L$, whether a frame of a biframe, we will denote by $\mathcal{S}(L)$ posetal reflection of this preorder, i.e., the partially ordered set of equivalence classes $\langle f \rangle$ of extremal epimorphisms $f\colon L\to M$. The meet of $(f_i\colon L\to M_i)_{i\in I}$ is given by the canonical morphism $f\colon L\to M$ into the colimit of the diagram. In the case of frames, the elements of $\mathcal{S}(L)$ are called \emph{sublocales}, as they are the (extremal) subobject in the dual category $\mathsf{Loc}$ of locales. Sublocales are represented in frame theory in several different ways: sublocale sets, nuclei, frame quotients, \dots. In this paper, we will make use of the following: $\mathcal{S}(L)^{\rm op}$ is isomorphic to the frame $\mathcal{C}L$ of frame congruences of $L$. Going back to bispaces and mimicking the terminology from frame  theory, for a  biframe $L$, we are going to call the elements of $\mathcal{S}(L)$, that is, its extremal subobjects in the dual category of $\mathsf{BiFrm}$, \emph{subbilocales}. 

\begin{remark}
Admittedly, this term, subbilocale, has been introduced in the literature a few times with different meanings. Given a biframe $L$, Picado and Pultr in \cite{PP14} and Nxumalo in \cite{N23}  use this term to refer to a biframe $(S_0,S_1,S_2)$ where $S_0$ is a sublocale set of $L_0$ with an associated frame quotient $\nu_S\colon L_0\to S_0$ and $S_i=\nu_S[L_i]$ for $i=1,2$. This is weaker than our notion. Ferreira, Guti\'errez Garc\'\i a and Picado in \cite{FGP11} use it in an even weaker sense, as they refer with it to \emph{regular} subobjects in the dual category, which does not even require that $S_i=\nu_S[L_i]$ for $i=1,2$.
\end{remark}

If a biframe map $f\colon L\to M$ is an extremal epimorphism then $f\colon L_0\to M_0$ is an extremal epimorphism in $\mathsf{Frm}$, but the converse does not hold in general. For instance, consider a biframe $L$ where $L_0$ is a proper quotient of $L_1\oplus L_2$. Then the  biframe map $f\colon(L_1\oplus  L_2,L_1,L_2)\to L$ given by the frame quotient map $f\colon L_1\oplus L_2\to L_0$ is not an extremal epimorphism,  as it is a non-isomorphic monomorphism.

However, any extremal epimorphism $f\colon L_0 \to S$ in $\mathsf{Frm}$ gives rise to an extremal epimorphism in $\mathsf{BiFrm}$ in a slightly more refined way. Note that $f$ determines a biframe homomorphism $f\colon L\to (S,f[L_1],f[L_2])$. By the factorization in Proposition \ref{factor}, we have the commutative diagram
\[
\xymatrix{
L\ar[rr]^-{f}\ar[dr]_-{\overline{f}}&&(S,f[L_1],f[L_2])\\
&(f(L),f[L_1],f[L_2])\ar[ur]_e&
}
\]
where $\overline{f}$ an extremal epimorphism.

\begin{proposition}
For any biframe $L$, the assignment $\langle f\rangle\mapsto \langle\overline{f}\rangle$ defines a closure operator $B_L$ on $\mathcal{S}(L_0)$.
\end{proposition}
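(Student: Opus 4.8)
The plan is to verify directly that $B_L$ is (i) well-defined on equivalence classes, (ii) monotone, (iii) inflationary, and (iv) idempotent; these are precisely the axioms of a closure operator on the poset $\mathcal{S}(L_0)$.

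\textbf{Well-definedness and monotonicity.} First I would unpack what $\mathcal{S}(L_0)$ is in this setting: an element is a class $\langle f\rangle$ of an extremal epimorphism $f\colon L_0\to S$ in $\mathsf{Frm}$, and $\langle f\rangle\le\langle g\rangle$ (in the order induced by the preorder $\lesssim$ described above) exactly when $g$ factors through $f$. To each such $f$ we associate the biframe homomorphism $f\colon L\to(S,f[L_1],f[L_2])$ and then take $\overline f$ from the factorization of Proposition~\ref{factor}; by the discussion preceding the statement, $\overline f$ is an extremal epimorphism in $\mathsf{BiFrm}$, so $\langle\overline f\rangle\in\mathcal{S}(L)$. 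To see that the assignment respects the order (which simultaneously handles well-definedness, since $\langle f\rangle=\langle g\rangle$ iff $\langle f\rangle\le\langle g\rangle$ and $\langle g\rangle\le\langle f\rangle$), suppose $\langle f\rangle\le\langle g\rangle$ in $\mathcal{S}(L_0)$, witnessed by $k\colon S\to T$ with $k\cdot f=g$ (here $f\colon L_0\to S$, $g\colon L_0\to T$ in $\mathsf{Frm}$). I want a biframe homomorphism $(f(L),f[L_1],f[L_2])\to(g(L),g[L_1],g[L_2])$ compatible with $\overline f$ and $\overline g$. Since $k$ maps $f[L_i]=f[L_i]$ into $g[L_i]$ (because $g_i=k\cdot f_i$ has image contained in $k[f[L_i]]\subseteq g[L_i]$, and in fact equals it), $k$ restricts to a biframe homomorphism $\tilde k\colon(S,f[L_1],f[L_2])\to(T,g[L_1],g[L_2])$; composing with the factorization arrows and using the universal property of the extremal epi--mono factorization (or uniqueness of $\overline f$ as the extremal-epi part), one obtains the required comparison map, hence $\langle\overline f\rangle\le\langle\overline g\rangle$ in $\mathcal{S}(L)$.

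\textbf{Inflationary and idempotent.} For the inflationary property I must compare $B_L\langle f\rangle=\langle\overline f\rangle$ with $\langle f\rangle$ itself --- but these live in \emph{different} posets ($\mathcal{S}(L)$ and $\mathcal{S}(L_0)$ respectively), so "inflationary" here must mean with respect to the natural embedding of $\mathcal{S}(L_0)$ into $\mathcal{S}(L)$ sending $\langle f\rangle\mapsto\langle f\colon L\to(S,f[L_1],f[L_2])\rangle$ (this is exactly the map discussed in the paragraph before the statement; note its image need not consist of extremal epimorphisms, which is why a closure operator is needed). With this reading, the factorization $f=e\cdot\overline f$ from Proposition~\ref{factor}, with $e$ a monomorphism, exhibits $f$ as factoring through $\overline f$, i.e.\ $\langle f\rangle\le\langle\overline f\rangle$; so $B_L$ is inflationary. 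For idempotence I would compute $B_L\langle\overline f\rangle$: the biframe $(f(L),f[L_1],f[L_2])$ is \emph{already} a genuine biframe with ambient frame a quotient of $f[L_1]\oplus f[L_2]$, and $\overline f$ is an extremal epimorphism; feeding $\overline f\colon L\to(f(L),f[L_1],f[L_2])$ through Proposition~\ref{factor} again, its extremal-epi part is $\overline f$ itself (the mono part is an isomorphism), so $\langle\overline{\overline f}\rangle=\langle\overline f\rangle$, i.e.\ $B_L\circ B_L=B_L$.

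\textbf{Main obstacle.} The delicate point is bookkeeping the two different posets and pinning down precisely which embedding $\mathcal{S}(L_0)\hookrightarrow\mathcal{S}(L)$ the statement intends, and checking it is order-preserving and order-reflecting so that "closure operator on $\mathcal{S}(L_0)$" is a meaningful assertion about the composite $\mathcal{S}(L_0)\to\mathcal{S}(L)\to\mathcal{S}(L_0)$ (or about $B_L$ viewed as an operator on the image). Once that is settled, the three axioms reduce to the universal property of the $(\text{extremal epi})$--$(\text{mono})$ factorization established in Proposition~\ref{factor} and Proposition~\ref{extremalchar}, together with the elementary fact that restricting a frame homomorphism along the inclusions $L_i\hookrightarrow L_0$ sends images to images; I expect no further genuinely hard computation, only careful diagram chases verifying that the comparison maps are themselves biframe homomorphisms.
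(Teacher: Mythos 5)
Your overall strategy (verify monotone, inflationary, idempotent via the factorization of Proposition \ref{factor}) is exactly the paper's, and the comparison maps you construct for monotonicity and idempotence are the right ones. But the issue you single out as the ``main obstacle'' is not an obstacle, and your proposed resolution of it is wrong. You claim that $\langle f\rangle$ and $B_L\langle f\rangle=\langle\overline{f}\rangle$ live in different posets ($\mathcal{S}(L_0)$ versus $\mathcal{S}(L)$) and that one must mediate via an embedding $\mathcal{S}(L_0)\hookrightarrow\mathcal{S}(L)$ ``whose image need not consist of extremal epimorphisms.'' In fact $\overline{f}\colon L_0\to f(L)$ is an \emph{onto frame homomorphism with domain $L_0$}: it is the pushout of the onto map $f|_{L_1}\oplus f|_{L_2}$ along $q_L$, and pushouts in $\Frm$ preserve onto maps. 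Hence $\langle\overline{f}\rangle$ is itself an element of $\mathcal{S}(L_0)$ and $B_L$ is genuinely an endomap of $\mathcal{S}(L_0)$, exactly as the statement says; no passage to $\mathcal{S}(L)$ is involved (and the embedding you describe cannot have non-extremal-epis in its image, since $\mathcal{S}(L)$ consists by definition only of classes of extremal epimorphisms). The relation to $\mathcal{S}(L)$ enters only afterwards: Corollary \ref{completequotients} identifies $\mathcal{S}(L)$ with the fixed points of $B_L$ inside $\mathcal{S}(L_0)$. Once this is understood, your inflationary argument is just the paper's: $f=e\cdot\overline{f}$ witnesses $f\lesssim\overline{f}$ in the preorder on quotients of $L_0$.

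A smaller point: you switch order conventions midstream. In the monotonicity paragraph you take $\langle f\rangle\le\langle g\rangle$ to be witnessed by $k\colon S\to T$ with $k\cdot f=g$, whereas the paper (and your own inflationary paragraph) takes $f\lesssim g$ to mean that $f$ factors \emph{through} $g$, i.e.\ $h\cdot g=f$ for some $h\colon T\to S$. Your construction of the comparison map $f(L)\to g(L)$ from the restriction of $k$ is correct and is essentially the same pushout-comparison (or diagonal fill-in) the paper uses, but under the paper's convention your hypothesis reads $\langle g\rangle\le\langle f\rangle$ and your conclusion reads $\langle\overline{g}\rangle\le\langle\overline{f}\rangle$ --- still monotonicity, just with the roles of $f$ and $g$ swapped. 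Fix the convention once and the argument goes through; the idempotence step ($\overline{f}$ is already an extremal epimorphism in $\mathsf{BiFrm}$, so the mono part of its factorization is an isomorphism) is correct as written.
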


\begin{proof}
Let $f\colon L_0\to S$ and $g\colon L_0 \to T$ be a frame quotient maps such that $f\lesssim  g$, that is, there exists a frame homomorphism $h\colon T\to S$ such that $h\cdot g=f$. We have the following two pushouts diagrams:
\[
\xymatrix{
L_1\oplus L_2\ar[r]^-{q_L}\ar[d]_{f_1\oplus f_2} & L_0\ar[d]^{\overline{f}}\\
f[L_1]\oplus f[L_2]\ar[r]_-{\overline{q_L}}&f(S)}
\qquad
\xymatrix{
L_1\oplus L_2\ar[r]^-{q_L}\ar[d]_{g_1\oplus g_2} & L_0\ar[d]^{\overline{g}}\\
g[L_1]\oplus g[L_2]\ar[r]_-{\overline{q_L}^\prime}&g(T)}
\]
Since
\[
h_1\oplus h_2\cdot g_1\oplus g_2=f_1\oplus f_2,
\]
 where $h_i$ is the restriction of $h$ to $g[L_i]\to f[L_i]$, 
there exists $\widetilde{h}\colon g(T)\to f(S)$ such that 
\[
\xymatrix@R+1.pc@C+1.pc{
L_1\oplus L_2\ar@/_4.5pc/[dd]_{f_1\oplus f_2}\ar[r]^-{q_L}\ar[d]_{g_1\oplus g_2} & L_0\ar[d]^{\overline{g}}\ar@/^1.5pc/[ddr]^{\overline{f}}&\\
g[L_1]\oplus g[L_2]\ar[r]_-{\overline{q_L}^\prime}\ar[d]_{h_1\oplus h_2}&g(T)\ar@{.>}[dr]^{\widetilde{h}}&\\
f[L_1]\oplus f[L_2]\ar[rr]_{\overline{q_L}}&&f(S)}
\]
commutes. Thus, $\widetilde{h}\cdot \overline{g}=\overline{f}$, that is, $\overline{f}\lesssim\overline{g}$. Therefore $B_L$ is monotone.

Further, as
\[
\xymatrix{
L_0\ar[rr]^{f}\ar[dr]_{\overline{f}}&&S\\
&f(L)\ar[ur]_e&
}
\]
commutes, we have $f\lesssim\overline{f}$. Thus $B_L$ is inflationary. Finally, for idempotence, since $\overline{f}$ is an extremal epimorphism and $e^\prime$ a monomorphism in the commuting diagram 
\[
\xymatrix{
L\ar[rr]^-{\overline{f}}\ar[dr]_-{\overline{\overline{f}}}&&(f(L),f[L_1],f[L_2])\\
&(\overline{f}(f(L)),f[L_1],f[L_2])\ar[ur]_{e^\prime}&
}
\]
then $e^\prime$ is an isomorphism. In consequence $\overline{f}\simeq\overline{\overline{f}}$.
\end{proof}

\begin{corollary}\label{completequotients}
For any biframe $L$, $\mathcal{S}(L)$ is a complete lattice.
\end{corollary}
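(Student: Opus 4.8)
The statement to prove is Corollary \ref{completequotients}: for any biframe $L$, $\mathcal{S}(L)$ is a complete lattice. The strategy is to leverage the closure operator $B_L$ on $\mathcal{S}(L_0)$ that was just constructed, together with the known fact (recalled in Section \ref{sectionlatq}) that $\mathcal{S}(L_0)$ is a complete lattice — indeed $\mathcal{S}(L_0)^{\mathrm{op}}\cong\mathcal{C}L_0$. The key observation is that $\mathcal{S}(L)$, as a poset, is isomorphic to the poset of fixed points of the closure operator $B_L$.

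\textbf{Key steps.} First I would identify the image of the embedding $\langle f\rangle\mapsto\langle\overline{f}\rangle$ described above. The discussion preceding the proposition shows that every extremal epimorphism $g\colon L\to M$ in $\mathsf{BiFrm}$ has $M\cong(g(L),g[L_1],g[L_2])$ with $g=\overline{g_0}$ up to the identification of $M$ with $(g(L),\dots)$; conversely, by Proposition \ref{extremalchar}, a biframe homomorphism is an extremal epimorphism exactly when it arises as the pushout of $g_1\oplus g_2$ along $q_L$, which is precisely the form $\overline{f}$ takes. So the assignment $\langle g\rangle\mapsto\langle g_0\rangle$ gives a map $\mathcal{S}(L)\to\mathcal{S}(L_0)$, and the composite $\mathcal{S}(L_0)\xrightarrow{B_L}\mathcal{S}(L_0)$ factors as $\mathcal{S}(L_0)\twoheadrightarrow\mathcal{S}(L)\hookrightarrow\mathcal{S}(L_0)$. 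The second step is to verify that this exhibits $\mathcal{S}(L)$ as (order-isomorphic to) the set of $B_L$-fixed points $\{\langle f\rangle\in\mathcal{S}(L_0)\mid B_L\langle f\rangle=\langle f\rangle\}$: idempotence of $B_L$ gives that $\langle\overline{f}_0\rangle$ is always a fixed point, and if $\langle f\rangle$ is a fixed point then $\overline{f}\simeq f$ as biframe maps, so $\langle f\rangle$ is in the image; injectivity on fixed points and order-reflection follow because $\langle g\rangle\mapsto\langle g_0\rangle$ is order-reflecting (if $g_0\lesssim g'_0$ via a frame map $h$, the restrictions $h_i$ assemble into a biframe map by the pushout property, exactly as in the proof that $B_L$ is monotone) and is a left inverse to $\langle f\rangle\mapsto\langle\overline{f}\rangle$ on fixed points. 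The final step invokes the standard fact that the fixed-point set of a closure operator on a complete lattice is again a complete lattice: arbitrary meets are computed in $\mathcal{S}(L_0)$, and arbitrary joins are obtained by applying $B_L$ to the join computed in $\mathcal{S}(L_0)$.

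\textbf{Main obstacle.} The substantive point — and the only thing beyond bookkeeping — is the correspondence between $B_L$-fixed points in $\mathcal{S}(L_0)$ and subbilocales of $L$, i.e.\ that $\langle f\rangle$ being $B_L$-fixed is equivalent to the biframe map $L\to(f[L_0],f[L_1],f[L_2])$ being an extremal epimorphism in the sense of Proposition \ref{extremalchar}. This is essentially repackaging the factorization of Proposition \ref{factor} together with the characterization in Proposition \ref{extremalchar}: $f\colon L_0\to S$ is $B_L$-fixed iff $e\colon f(L)\to S$ is an isomorphism iff $f$ (as a biframe map to $(S,f[L_1],f[L_2])$) is already the pushout of $f_1\oplus f_2$ along $q_L$. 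One must also check that the meets described in Section \ref{sectionlatq} (canonical maps into colimits) really do agree with meets in the fixed-point lattice, but since meets of extremal epimorphisms into colimits coincide with intersections of the corresponding congruence data, this matches the meet-in-$\mathcal{S}(L_0)$ computation, with no further work needed. Everything else is diagram-chasing of the kind already carried out in the preceding proofs.
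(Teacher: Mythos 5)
Your proposal is correct and follows exactly the paper's route: identify $\mathcal{S}(L)$ with the poset of fixed points of the closure operator $B_L$ on the complete lattice $\mathcal{S}(L_0)$, then invoke the standard fact that the fixed-point set of a closure operator on a complete lattice is itself a complete lattice. The paper states the identification as ``clear''; your unpacking of it via Propositions \ref{factor} and \ref{extremalchar} is a faithful elaboration of the same argument, not a different one.
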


\begin{proof}
It is clear that $\mathcal{S}(L)$ is isomorphic to the poset of fixed elements of $\mathcal{S}(L_0)$ with respect to $B_L$.  Consequently, $\mathcal{S}(L)$ is a complete lattice.
\end{proof}

\section{subbilocales induced by components' quotients}\label{sectioninducedsubs}

In the previous section we have seen that a quotient of the ambient frame induces an extremal epimorphism in a biframe. In this section we show how a pair of quotients of the  component frames do it. This  approach will be useful in order to describe the lattice  of  subbilocales of a biframe.

There is a functor $\mathcal{S}$ from the category $\mathsf{Frm}$ of frames  to the category $\mathsf{coFrm}$ of coframes (with coframe homomorphism) that maps $L\mapsto\mathcal{S}(L)$ (see \cite{PJ82}). Specifically, for a frame homomorphism $k\colon L\to M$, $\mathcal{S}(k)\colon\mathcal{S}(L)\to\mathcal{S}(M)$ maps $\langle f\rangle$ to the equivalence class $\langle g\rangle$ of the pushout $g$ of $f$ along $k$. 
\begin{proposition}
Let $f\colon L\to T$ be an extremal epimorphism in $\mathsf{BiFrm}$. Then
\[
\langle f\rangle=\mathcal{S}(e_{L_1})\langle f_1\rangle\wedge\mathcal{S}(e_{L_2})\langle f_2\rangle.
\]
\end{proposition}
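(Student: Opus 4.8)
The plan is to work with the congruence-lattice description $\mathcal{S}(L)^{\mathrm{op}}\cong \mathcal{C}L$ that was recalled in Section~\ref{sectionlatq}, together with the characterization of extremal epimorphisms from Proposition~\ref{extremalchar}. Concretely, writing $\langle f\rangle$ for the subbilocale represented by an extremal epimorphism $f\colon L\to T$, what has to be shown is that $\langle f\rangle$ is the \emph{infimum} in $\mathcal{S}(L)$ of the two subbilocales $\mathcal{S}(e_{L_1})\langle f_1\rangle$ and $\mathcal{S}(e_{L_2})\langle f_2\rangle$. Recall that an infimum in $\mathcal{S}(L)$ is computed as the colimit of the corresponding diagram of quotients, equivalently as the join of the corresponding congruences in $\mathcal{C}L$; so the goal splits naturally into a lower-bound part (both $\mathcal{S}(e_{L_i})\langle f_i\rangle$ dominate $\langle f\rangle$ in the $\lesssim$ order) and a universality part ($\langle f\rangle$ is the \emph{greatest} such lower bound).

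First I would unwind the functor $\mathcal{S}(e_{L_i})$: by definition $\mathcal{S}(e_{L_i})\langle f_i\rangle$ is the class of the pushout $g_i$ of the quotient $f_i\colon L_i\to f[L_i]$ along the inclusion $e_{L_i}\colon L_i\to L_0$. I would record that this pushout can be computed one step at a time through the coproduct, namely that $g_i$ is (up to isomorphism) the pushout of $f_1\oplus f_2$ restricted appropriately along $q_L$ — more precisely, $g_i$ is the ambient component of the extremal epimorphism obtained by pushing out along $q_L$ the map $f_i\oplus \mathrm{id}$ (or $\mathrm{id}\oplus f_i$) on $L_1\oplus L_2$. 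For the lower-bound direction, since $f_1\oplus f_2$ factors through $f_i\oplus\mathrm{id}$ for each $i$, the pushout $f$ of $f_1\oplus f_2$ along $q_L$ (which is $f$ by Proposition~\ref{extremalchar}) factors through each $g_i$; hence $g_i\lesssim f$ in $\mathcal{S}(L_0)$, and because $f$, $g_1$, $g_2$ are all genuine subbilocales (their component parts being the images $f[L_i]$), this lifts to $\mathcal{S}(e_{L_i})\langle f_i\rangle \geq \langle f\rangle$ in $\mathcal{S}(L)$ after applying the closure operator $B_L$ and using that $B_L$ fixes each of these.

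For the universality direction, suppose $\langle h\rangle$ is any subbilocale of $L$ with $\mathcal{S}(e_{L_i})\langle f_i\rangle \geq \langle h\rangle$ for $i=1,2$; I must show $\langle f\rangle\geq\langle h\rangle$. Translating through Proposition~\ref{extremalchar}, $h$ is the pushout of $h_1\oplus h_2$ along $q_L$, and the hypothesis says that the quotient $h$ of $L_0$ factors through each $g_i$; I would then combine the two factorizations using the coproduct structure — the congruence on $L_1\oplus L_2$ collapsed by $h_1\oplus h_2$ contains both the congruence collapsed by $f_1\oplus\mathrm{id}$ and the one collapsed by $\mathrm{id}\oplus f_2$, hence contains their join, which is exactly the congruence collapsed by $f_1\oplus f_2$ — and conclude $h_1$ factors through $f_1$ and $h_2$ through $f_2$, whence by the pushout characterization $h$ factors through $f$, i.e.\ $\langle f\rangle\geq\langle h\rangle$. \textbf{The main obstacle} I anticipate is the bookkeeping in this last step: one must be careful that "$h$ factors through $g_i$ in $\mathcal{S}(L_0)$" really forces "$h_i$ factors through $f_i$ at the component level," rather than merely giving a factorization of ambient frames that ignores the component frames; the clean way around this is to argue entirely inside $\mathcal{C}(L_1\oplus L_2)$, reducing everything to the identity $[\mathrm{ker}(f_1\oplus\mathrm{id})] \vee [\mathrm{ker}(\mathrm{id}\oplus f_2)] = [\mathrm{ker}(f_1\oplus f_2)]$ of congruences, which holds because $f_1\oplus f_2 = (\mathrm{id}\oplus f_2)\cdot(f_1\oplus\mathrm{id})$ and these two factors have "independent" kernels supported on the two coproduct summands.
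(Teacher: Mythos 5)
Your overall strategy --- reduce everything to Proposition~\ref{extremalchar} and to congruences on $L_1\oplus L_2$ via the factorization $f_1\oplus f_2=(\mathrm{id}\oplus f_2)\cdot(f_1\oplus\mathrm{id})$ --- is sound and genuinely different from the paper's proof, which instead forms the canonical map $f'\colon L_0\to T_0'$ into the colimit of the two pushouts $f_i'$ and constructs explicit mediating morphisms in both directions ($f=k\cdot f'$ from the colimit property, and $f'=l\cdot f$ by feeding a suitable $q_T'$ into the pushout square of Proposition~\ref{extremalchar}). Your lower-bound half works: since $e_{L_i}=q_L\cdot\iota_{L_i}$ and the pushout of $f_i$ along $\iota_{L_i}$ is $f_i\oplus\mathrm{id}$ (resp.\ $\mathrm{id}\oplus f_i$), pasting pushouts shows that $f$ factors through each representative $g_i$ of $\mathcal{S}(e_{L_i})\langle f_i\rangle$, i.e.\ $\langle f\rangle\leq\mathcal{S}(e_{L_i})\langle f_i\rangle$ in $\mathcal{S}(L_0)$; the final appeal to $B_L$ in that paragraph is unnecessary, since the inequality already lives in $\mathcal{S}(L_0)$, which is where the stated meet is taken.

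The gap is in the universality half. The meet in the statement is computed in $\mathcal{S}(L_0)$ (this is exactly what Corollary~\ref{meetgenerated} needs), so it is the class of the canonical map into the colimit of $g_1$ and $g_2$, and you must show that \emph{every sublocale of $L_0$} below both $\langle g_1\rangle$ and $\langle g_2\rangle$ is below $\langle f\rangle$ --- not only every subbilocale. As written you quantify over subbilocales $\langle h\rangle$ and use Proposition~\ref{extremalchar} to present $h$ as the pushout of $h_1\oplus h_2$; that proves only that $\langle f\rangle$ is the greatest lower bound inside $\mathcal{S}(L)$, which does not yet identify it with the $\mathcal{S}(L_0)$-meet (that this meet is again a subbilocale is the content of the \emph{next} proposition, so it is not available here). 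There is also a smaller slip: from ``$h$ factors through $g_i$'' what you directly get is $\ker(h\cdot q_L)\supseteq\ker(g_i\cdot q_L)\supseteq\ker(f_i\oplus\mathrm{id})$; the congruence collapsed by $h_1\oplus h_2$ is in general strictly smaller than $\ker(h\cdot q_L)$, so your claimed containment for $h_1\oplus h_2$ needs an extra step (injectivity of $e_{H_i}$ to see $\ker h_i\supseteq\ker f_i$). Both problems vanish if you run your own ``clean way'' on $\ker(h\cdot q_L)$ for an \emph{arbitrary} onto $h\colon L_0\to H$: then $\ker(h\cdot q_L)\supseteq\ker(f_1\oplus\mathrm{id})\vee\ker(\mathrm{id}\oplus f_2)=\ker(f_1\oplus f_2)$ --- the last identity holding because the coproduct of the two pushout squares $L_i\to f[L_i]$ is again a pushout --- so $h\cdot q_L$ factors through $f_1\oplus f_2$, and the pushout property of $f$ from Proposition~\ref{extremalchar} yields $h=v\cdot f$. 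With that repair the argument is complete.
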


\begin{proof}
For $i=1,2$, let $f_i^\prime \colon L_0\to T_i^\prime$ be the pushout of $f_i$ along $e_{L_i}$.  Thus $\mathcal{S}(e_{L_i})\langle f_1\rangle=\langle f_i^\prime\rangle$. Also let $f^\prime$ be the canonical map $L_0\to T_0^\prime$ into the colimit of the diagram $(f_i^\prime\colon L_0\to T_i^\prime)_{i=1,2}$. Thus $\langle f_1^\prime\rangle\wedge\langle f_2^\prime\rangle=\langle f^\prime\rangle$.  Therefore, we need to show that $\langle f\rangle=\langle f^\prime\rangle$.

Since $f\cdot e_{L_i}=e_{T_i}\cdot f_i$, there exist  $r_i\colon T_i^\prime\to T_0$ such that 
\[
\xymatrix@R+1.pc@C+1.pc{
L_i\ar[r]^{e_{L_i}}\ar[d]_{f_i}	&L_0\ar[d]^{f_i^\prime}\ar@/^1.5pc/[ddr]^{f}&\\
T_i\ar[r]_{e_{L_i}^\prime}\ar@/_1.5pc/[drr]_{e_{T_i}}&T_i^\prime\ar[dr]^{r_i}&\\
&&T_0
}
\]
commutes, where the inner square is a pushout diagram. Thus $f=r_i\cdot f_i^\prime$. 
Therefore, there exists $k\colon T_0^\prime\to T_0$ such that 
\[
\xymatrix@R+1.pc@C+1.pc{
L_0\ar[d]_{f_1^\prime}\ar[r]^{f_2^\prime} \ar@/^1pc/[dr]^{f^\prime}& T_2^\prime\ar[d]^{p_2}\ar@/^1.5pc/[ddr]^{r_2}&\\
T_1^\prime \ar[r]_{p_1}\ar@/_1.5pc/[drr]_{r_1}&T_0^\prime\ar[dr]^k&\\
&&T_0}
\]
commutes, where, again, the inner square is a pushout diagram, that is, $p_1$ is the pushout of $f_2^\prime$ along $f_1^\prime$ and $p_2$ the pushout of $f_1^\prime$ along $f_2^\prime$. Therefore $\langle f^\prime\rangle\geq \langle f\rangle$, as $f=k\cdot f^\prime$. 
On the other hand, 
 let $q_T^\prime\colon T_1\oplus T_2\to T_0^\prime$ be the homomorphism that makes
\[
\xymatrix{
T_1\ar[r]^{\iota_{T_1}}\ar[d]_{e_{L_1}^\prime}&T_1\oplus T_2\ar[d]_{q_S^\prime}&T_2\ar[l]_{\iota_{T_2}}\ar[d]_{e_{L_2}}\\
T_1^\prime\ar[r]_{p_1}&T_0^\prime&T_2^\prime\ar[l]^{p_2}
}
\]
commute. As
\[
\begin{aligned}
q_T^\prime\cdot f_1\oplus f_2\cdot \iota_{L_i}&=q_T^\prime\cdot\iota_{T_i}\cdot f_i\\
&=p_i\cdot e_{L_i}^\prime\cdot f_i\\
&=p_i\cdot f_i^\prime\cdot e_{L_i}\\
&=f^\prime \cdot e_{L_i}\\
&=f^\prime \cdot q_L\cdot \iota_{L_i}
\end{aligned}
\]
for $i=1,2$, we have that $q_S^\prime\cdot f_1\oplus f_2=f^\prime \cdot q_L$. By Proposition \ref{extremalchar} there exists $l\colon T_0\to T_0^\prime$ making
\[
\xymatrix{
L_1\oplus L_2\ar[r]^{q_L}\ar[d]_{f_1\oplus f_2}&L_0\ar[d]^{f}\ar@/^1.5pc/[ddr]^{f^\prime}&\\
T_1\oplus T_2\ar[r]_{q_T}\ar@/_1.5pc/[drr]_{q_T^\prime}&T_0\ar[dr]_l&\\
&&S_0^\prime}
\]
commute, where the inner square is a pushout diagram. As $f^\prime =l\cdot f$, we conclude that $\langle f\rangle=\langle f^\prime\rangle$.
\end{proof}

\begin{proposition}
Let $L$ be a biframe. For any $\langle f_1\rangle\in\mathcal{S}(L_1)$ and $\langle f_2\rangle\in\mathcal{S}(L_2)$ there exists an extremal epimorphism $f\colon L\to M$ in $\mathsf{BiFrm}$ such that $\langle f_0\rangle=\mathcal{S}(e_{L_1})\langle f_1\rangle\wedge\mathcal{S}(e_{L_2})\langle f_2\rangle$.
\end{proposition}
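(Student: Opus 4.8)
The plan is to realize the required extremal epimorphism as a pushout and then read off the stated identity from the functoriality of $\mathcal{S}$. First choose onto frame homomorphisms $f_1\colon L_1\to M_1$ and $f_2\colon L_2\to M_2$ representing the given classes, and let $f_0\colon L_0\to M_0$, together with $q'\colon M_1\oplus M_2\to M_0$, be the pushout of $f_1\oplus f_2$ along $q_L$ in $\mathsf{Frm}$. A coproduct of onto maps is onto and onto frame homomorphisms are stable under pushouts, so both $f_1\oplus f_2$ and $q'$ are onto. Put $M_i':=f_0[L_i]$, a subframe of $M_0$; since $L_1\cup L_2$ generates $L_0$ and $f_0$ is onto, $M_1'\cup M_2'$ generates $M_0$, so $M:=(M_0,M_1',M_2')$ is a biframe and $f:=f_0$ is a biframe homomorphism $L\to M$ whose component maps $\bar f_i\colon L_i\to M_i'$ are the (onto) corestrictions of $f_0|_{L_i}$.

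Next I would check that $f$ is an extremal epimorphism using Proposition~\ref{extremalchar}: as $\bar f_1,\bar f_2$ are onto, it suffices to see that $f_0$ is the pushout of $\bar f_1\oplus\bar f_2$ along $q_L$. From $f_0\cdot e_{L_i}=q'\cdot\iota_{M_i}\cdot f_i$ and $f_i$ onto we get $M_i'=q'[\iota_{M_i}[M_i]]$, so $q'\cdot\iota_{M_i}$ corestricts to an onto map $h_i\colon M_i\to M_i'$ with $\bar f_i=h_i\cdot f_i$; hence $\bar f_1\oplus\bar f_2=(h_1\oplus h_2)\cdot(f_1\oplus f_2)$. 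By the pasting law for pushouts, the pushout of $\bar f_1\oplus\bar f_2$ along $q_L$ coincides with the pushout of the onto map $h_1\oplus h_2$ along $q'$; but the kernel congruence of $h_1\oplus h_2$ is generated by pairs that $q'$ identifies (by the very definition of the $h_i$), hence is contained in the kernel congruence of $q'$, so the leg of this pushout out of $M_0$ is an isomorphism. Pasting then shows $f_0$ is the pushout of $\bar f_1\oplus\bar f_2$ along $q_L$, so by Proposition~\ref{extremalchar} $f$ is an extremal epimorphism in $\mathsf{BiFrm}$.

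It remains to identify $\langle f_0\rangle$. Since $f_1\oplus\mathrm{id}_{L_2}\colon L_1\oplus L_2\to M_1\oplus L_2$ is the pushout of $f_1$ along $\iota_{L_1}$, functoriality of $\mathcal{S}$ together with $q_L\cdot\iota_{L_i}=e_{L_i}$ gives $\mathcal{S}(e_{L_1})\langle f_1\rangle=\mathcal{S}(q_L)\langle f_1\oplus\mathrm{id}_{L_2}\rangle$ and, symmetrically, $\mathcal{S}(e_{L_2})\langle f_2\rangle=\mathcal{S}(q_L)\langle\mathrm{id}_{L_1}\oplus f_2\rangle$. As $\mathcal{S}(q_L)$ is a coframe homomorphism it preserves finite meets, so, using the standard identity $\langle f_1\oplus\mathrm{id}_{L_2}\rangle\wedge\langle\mathrm{id}_{L_1}\oplus f_2\rangle=\langle f_1\oplus f_2\rangle$ in $\mathcal{S}(L_1\oplus L_2)$ (the product-of-sublocales fact; see also \cite{PP12}),
\[
\mathcal{S}(e_{L_1})\langle f_1\rangle\wedge\mathcal{S}(e_{L_2})\langle f_2\rangle=\mathcal{S}(q_L)\langle f_1\oplus f_2\rangle=\langle f_0\rangle,
\]
the last equality because $f_0$ was constructed as the pushout of $f_1\oplus f_2$ along $q_L$. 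This is the asserted equality.

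The delicate step is the middle one. Although $M_0$ comes from $f_1\oplus f_2$, the component frames $M_i'$ of $M$ need not be isomorphic to the chosen $M_i$---the map $q'\cdot\iota_{M_i}$ may fail to be injective---so Proposition~\ref{extremalchar} cannot be quoted directly with the representatives $f_1,f_2$; one has to pass to the corestricted maps $\bar f_i$ and argue via the pasting law and the kernel containment above. Equivalently, one checks that the monomorphism $e$ in the factorization $f=e\cdot\overline f$ of Proposition~\ref{factor} is onto (its components are identities on the $M_i'$, and $q'$ factors through its ambient part composed with an onto map), hence an isomorphism.
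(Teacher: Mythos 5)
Your proof is correct, but it takes a genuinely different route from the paper's. The paper arranges matters so that the meet identity holds by construction: it forms $f_i^\prime$ as the pushout of $f_i$ along $e_{L_i}$ (so $\mathcal{S}(e_{L_i})\langle f_i\rangle=\langle f_i^\prime\rangle$) and takes $g\colon L_0\to T_0$ to be the canonical map into the pushout of $f_1^\prime$ and $f_2^\prime$, which is the meet by definition of meets in $\mathcal{S}(L_0)$; essentially all of its effort then goes into a direct verification, with explicitly constructed mediating maps, of the pushout condition of Proposition~\ref{extremalchar}. You instead build $f_0$ in a single step as the pushout of $f_1\oplus f_2$ along $q_L$, get extremality cheaply from the factorization $\overline{f}_1\oplus\overline{f}_2=(h_1\oplus h_2)\cdot(f_1\oplus f_2)$ together with the pasting law and the observation that the kernel congruence of $h_1\oplus h_2$ is generated by pairs already identified by $q^\prime$ (so the corresponding pushout leg out of $M_0$ is an isomorphism), and then do the work on the other side: identifying $\langle f_0\rangle$ with the meet via functoriality of $\mathcal{S}$, meet-preservation by $\mathcal{S}(q_L)$, and the identity $\langle f_1\oplus\mathrm{id}\rangle\wedge\langle\mathrm{id}\oplus f_2\rangle=\langle f_1\oplus f_2\rangle$. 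I checked the auxiliary facts you invoke and they hold: $f_1\oplus\mathrm{id}_{L_2}$ is the pushout of $f_1$ along $\iota_{L_1}$, the kernel of $h_1\oplus h_2$ is generated by the images under the injections of the kernels of the $h_i$, and $\mathcal{S}(q_L)$ preserves (even arbitrary) meets since pushout along $q_L$ is a left adjoint between coslice categories. The one step you leave implicit is that the bottom leg $s\colon M_1^\prime\oplus M_2^\prime\to M_0$ of the pasted pushout is exactly $q_M$, which is what Proposition~\ref{extremalchar} requires; this is routine from $s\cdot\iota_{M_i^\prime}\cdot h_i=q^\prime\cdot\iota_{M_i}=e_{M_i^\prime}\cdot h_i$ and epicness of $h_i$. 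On balance, your route yields a shorter extremality argument and makes the link with the sublocale functor transparent, at the cost of leaning on the fact that $\mathcal{S}(q_L)$ is a coframe homomorphism; the paper's route is longer but entirely self-contained diagram chasing.
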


\begin{proof}
For $i=1,2$, let $f_i\colon L_i\to S_i$ be extremal epimorphisms. Let $f_i^\prime\colon L_0\to S_i^\prime$ be the pushout of $f_i$ along $e_{L_i}$ and $g\colon L_0\to T_0$ be the canonical map into the pushout of the diagram $(f_i^\prime\colon L_0\to S_i^\prime)_{i=1,2}$. Since both $f_1^\prime$ and $f_2^\prime$ are onto, so is $g$. Consequently the union of $T_1=g[e_{L_1}[L_1]]$ and $T_2=g[e_{L_2}[L_2]]$ forms a subbasis of $T_0$. Thus $T=(T_0,T_1,T_2)$ is a biframe. Further, if we denote by $g_i$  the restriction of $g$ to $g_i\colon L_i\to T_i $, that is, $e_{T_i}\cdot g_i$ is the canonical extremal epi-mono factorization of $g\cdot e_{L_i}$ in $\mathsf{Frm}$, we have that $g\cdot e_{L_i}= e_{T_i}\cdot g_i$. Accordingly $g\colon L\to T$ is a biframe homomorphism.

We will show now that $g\colon L\to T$ is an extremal epimorphism in $\mathsf{BiFrm}$. Since $g_1$ and $g_2$ are extremal epimorphisms, by Proposition \ref{extremalchar}, we only have to check that $g$ is the pushout of $g_1\oplus g_2$ along $q_L$.

First, for $i,j=1,2$ such that $i\neq j$, let $e_{L_i}^\prime$ is the pushout of $e_{L_i}$ along $f_i$ and let $p_i$ be the push out of $f_j^\prime$ along $f_i^\prime$. More graphically, these two diagrams
\[
\xymatrix{
L_i\ar[r]^{e_{L_i}}\ar[d]_{f_i} & L_0\ar[d]^{f_i^\prime}\\
S_i\ar[r]_{e^\prime_{L_i}}&S_i^\prime
}
\quad
\xymatrix{
L_0\ar[r]^{f^\prime_1}\ar[d]_{f^\prime_2} & S_1^\prime\ar[d]^{p_1}\\
S_2^\prime\ar[r]_{p_2}&T_0
}
\]
are pushouts.

Since
\[
\begin{aligned}
p_i\cdot e_{L_i}^\prime\cdot f_i&=p_i\cdot f_i^\prime e_{L_i}\\
&=g\cdot e_{L_i}\\
&=e_{T_i}\cdot g_i
\end{aligned}
\]
 and extremal epimorphisms and monomorphisms are orthogonal in $\mathsf{Frm}$, there exists $h_i\colon S_i\to T_i$ making 
\[
\xymatrix{
L_i\ar[r]^{f_i}\ar[dd]_{g_i}&S_i\ar[d]^{e_{L_i}^\prime}\ar[ddl]_-{h_i}\\
&S_i^\prime\ar[d]^{p_i}\\
T_i\ar[r]_{e_{T_i}}&T_0
}
\]
commute.

Now, in order to show that $g$ is indeed the pushout of $g_1\oplus g_2$ along $q_L$, let $n\colon L_0\to R$ and $m\colon T_1\oplus T_2\to R$ be frame homomorphisms such that $n\cdot q_L=m\cdot g_1\oplus g_2$. Then 
\[
\begin{aligned}
n\cdot e_{L_i}&=n\cdot q_L\cdot \iota_{L_i}\\
&=m\cdot g_1\oplus g_2\cdot \iota_{L_i}\\
&=m\cdot \iota_{T_i}\cdot g_i\\
&=m\cdot \iota_{T_i}\cdot h_i\cdot f_i.
\end{aligned}
\]
Thus there exists $k_i\colon S_i^\prime\to R$ such that 
\[
\xymatrix{
L_i\ar[r]^{e_{L_i}}\ar[d]_{f_i}&L_0\ar[d]^{f_i^\prime}\ar@/^1.5pc/[ddr]^{n}&\\
S_i\ar[r]_{e_{L_i}^\prime}\ar@/_1.5pc/[drr]_{m\cdot \iota_{T_i}\cdot h_i}&S_i^\prime\ar[dr]_{k_i}&\\
&&R}
\]
commutes. As $k_1\cdot f_1^\prime=n=k_2\cdot f_2^\prime$, there exists a frame homomorphism $l\colon T_0\to R$ making
\[
\xymatrix@R+1.pc@C+1.pc{
L_0\ar[r]^{f_2^\prime}\ar[d]_{f_1^\prime}\ar[dr]^{g}&S_2^\prime\ar[d]^{p_2}\ar@/^1.5pc/[ddr]^{k_2}&\\
S_1^\prime\ar[r]_{p_1}\ar@/_1.5pc/[drr]_{k_1}&T_0\ar[dr]^l&\\
&&R}
\]
commute. Thus we have that $l\cdot g=k_i\cdot f_i^\prime =n$. On the other hand, we have that
\[
\begin{aligned}
m\cdot \iota_{T_i}\cdot h_i&=k_i\cdot e_{L_i}^\prime\\
&=l\cdot p_i\cdot e_{L_i}^\prime\\
&=l\cdot e_{T_i}\cdot h_i\\
&=l\cdot q_T\cdot \iota_{T_i}\cdot h_i
\end{aligned}
\]
for $i=1,2$. As $h_i$ is epic, $m\cdot\iota_{T_i}=l\cdot q_T\cdot \iota_{T_i}$. By the universal property of coproducts, we conclude that $m=l \cdot q_T$. In summary, we have that
\[
\xymatrix{
L_1\oplus L_2\ar[r]^{q_L}\ar[d]_{g_1\oplus g_2} & L_0\ar[d]^g\ar@/^1.5pc/[ddr]^n&\\
T_1\oplus T_2\ar[r]_{\overline{q_L}}\ar@/_1.5pc/[rrd]_m& T_0\ar[dr]^l&\\
&&R
}
\]
commutes. Therefore the universal property of pushouts holds for the inner square.
\end{proof}

The following corollary follows directly from the last two propositions.
\begin{corollary}\label{meetgenerated}
Let $L$ be a biframe. Then $\mathcal{S}(L)$ is isomorphic to the sub-$\wedge$-semilattice of $\mathcal{S}(L_0)$ generated by
\[
\mathcal{S}(e_{L_1})[\mathcal{S}(L_1)]\cup\mathcal{S}(e_{L_2})[\mathcal{S}(L_2)].
\]
\end{corollary}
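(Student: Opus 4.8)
The plan is to realize $\mathcal{S}(L)$ as a subposet of $\mathcal{S}(L_0)$. Consider the map $\Phi\colon\mathcal{S}(L)\to\mathcal{S}(L_0)$ sending the class of an extremal epimorphism $f\colon L\to M$ in $\mathsf{BiFrm}$ to the class $\langle f_0\rangle$ of its underlying frame homomorphism. I would show that $\Phi$ is an order embedding, that its image is
\[
P=\{\, \mathcal{S}(e_{L_1})\langle f_1\rangle\wedge\mathcal{S}(e_{L_2})\langle f_2\rangle \mid \langle f_1\rangle\in\mathcal{S}(L_1),\ \langle f_2\rangle\in\mathcal{S}(L_2) \,\},
\]
and finally that $P$ is exactly the sub-$\wedge$-semilattice of $\mathcal{S}(L_0)$ generated by $\mathcal{S}(e_{L_1})[\mathcal{S}(L_1)]\cup\mathcal{S}(e_{L_2})[\mathcal{S}(L_2)]$; chaining these three facts together yields the stated isomorphism.

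That $\Phi$ is well defined and monotone is immediate: a biframe homomorphism witnessing $f\lesssim g$ has an underlying frame homomorphism witnessing $f_0\lesssim g_0$. The delicate point is that $\Phi$ \emph{reflects} the order. Suppose $f\colon L\to M$ and $g\colon L\to T$ are extremal epimorphisms in $\mathsf{BiFrm}$ and $h_0\cdot g_0=f_0$ for some frame homomorphism $h_0\colon T_0\to M_0$. By Proposition \ref{extremalchar} extremal epimorphisms in $\mathsf{BiFrm}$ are componentwise onto, so $M_i=f[L_i]$ and $T_i=g[L_i]$; hence $h_0[T_i]=h_0[g[L_i]]=f[L_i]=M_i$, so $h_0$ restricts to (onto) frame homomorphisms $T_i\to M_i$ and is therefore a biframe homomorphism $h\colon T\to M$. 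Since a biframe homomorphism is determined by its ambient component, $h\cdot g=f$, i.e.\ $f\lesssim g$. In particular $\Phi$ is injective, so $\mathcal{S}(L)$ is order-isomorphic to $\Phi[\mathcal{S}(L)]$.

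Identifying the image is where the two preceding propositions enter directly: the first says every element of $\Phi[\mathcal{S}(L)]$ lies in $P$, and the second says every element of $P$ lies in $\Phi[\mathcal{S}(L)]$, so $\Phi[\mathcal{S}(L)]=P$. It then remains to check that $P$ is the generated sub-$\wedge$-semilattice. Every generator lies in $P$: take the missing component to be the top sublocale, which is in the image of each $\mathcal{S}(e_{L_j})$ since coframe homomorphisms preserve the top. And $P$ is closed under binary meets: regrouping a meet of two members of $P$ and using that each $\mathcal{S}(e_{L_i})$ preserves finite meets turns it into a single member of $P$. Since $P$ contains the generators, is closed under finite meets, and consists entirely of finite meets of generators, it is exactly the sub-$\wedge$-semilattice generated by $\mathcal{S}(e_{L_1})[\mathcal{S}(L_1)]\cup\mathcal{S}(e_{L_2})[\mathcal{S}(L_2)]$, and the isomorphism $\mathcal{S}(L)\cong P$ follows.

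The one genuinely delicate step is the order-reflection of $\Phi$ in the second paragraph; everything else is bookkeeping with the two preceding propositions and the elementary properties of the functor $\mathcal{S}$. I do not expect trouble there, since the input it needs — that extremal epimorphisms in $\mathsf{BiFrm}$ are componentwise onto — is (half of) the content of Proposition \ref{extremalchar}.
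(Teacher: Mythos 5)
Your proposal is correct and follows essentially the same route as the paper, which simply derives the corollary from the two preceding propositions; your explicit verification that $\langle f\rangle\mapsto\langle f_0\rangle$ is an order embedding (the step you flag as delicate) is sound and is the point the paper covers implicitly via its earlier identification of $\mathcal{S}(L)$ with the fixed points of the closure operator $B_L$ on $\mathcal{S}(L_0)$ in Corollary \ref{completequotients}.
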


Note this last corollary provides an alternative proof for Corollary \ref{completequotients}, as $\mathcal{S}(e_{L_i})[\mathcal{S}(L_i)]$ are closed under arbitrary meets.

\begin{example}
Let $\mathbf{3}=\{0<a<1\}$ be a three element frame. $\boldsymbol{3}$ has precisely four non-isomorphic extremal epimorphism forming the lattice depicted in Figure \ref{latsubs3}, where $\mathfrak{c}(a)$ and $\mathfrak{o}(a)$ are the closed and open quotients determined by $a$, respectively.
\begin{figure}\caption{The lattice of sublocales of $\boldsymbol{3}$}
\label{latsubs3}
\[
\xymatrix{
&1_{\mathcal{S}(\boldsymbol{3})}\ar@{-}[dl]\ar@{-}[dr]&\\
\mathfrak{o}(a)\ar@{-}[dr]&&\mathfrak{c}(a)\ar@{-}[dl]\\
&0_{\mathcal{S}(\boldsymbol{3})}&
}
\]
\end{figure}

Now consider the biframe $\boldsymbol{3}.\boldsymbol{3}$ where the ambient frame is $\boldsymbol{3}\oplus \boldsymbol{3}$ (depicted in Figure \ref{3plus3})
\begin{figure}\caption{The coproduct frame $\boldsymbol{3}\oplus \boldsymbol{3}$}
\label{3plus3}
\[
\xymatrix{
&1\oplus 1\ar@{-}[d]&\\
&a\oplus 1\vee 1\oplus a\ar@{-}[dl]\ar@{-}[dr]&\\
a\oplus 1\ar@{-}[dr]&&1\oplus a\ar@{-}[dl]\\
&a\oplus a\ar@{-}[d]&\\
&0\oplus 0&
}
\]
\end{figure}
 and the component frames are isomorphic copies of $\boldsymbol{3}$ determined by the coproduct injections. Then, by Corollary \ref{meetgenerated}, we have that $\mathcal{S}(\boldsymbol{3}.\boldsymbol{3})$ is the sub-$\wedge$-semilattice of $\mathcal{S}(\boldsymbol{3}\oplus\boldsymbol{3})$ generated by \[
 \{\mathfrak{o}(a\oplus 1),\mathfrak{o}(1\oplus a), \mathfrak{c}(a\oplus 1),\mathfrak{c}(1\oplus a), 1 , 0\}.
 \]
 
 The functor $\mathcal{S}\colon\mathsf{Frm}\to \mathsf{coFrm}$ behaves as one may expect with respect to  closed and open sublocales: for any frame homomorphism $f\colon L\to M$ and any element $a\in L$, we have that $\mathcal{S}(f)(\mathfrak{c}(a))=\mathfrak{c}(f(a))$ and $\mathcal{S}(f)(\mathfrak{o}(a))=\mathfrak{o}(f(a))$. Using this fact, 
by easy calculations, one can check that the diagram in Figure \ref{Q(3.3)} shows the lattice of subbilocales of $\boldsymbol{3}.\boldsymbol{3}$.
\begin{figure}\caption{The lattice of subbilocales of $\boldsymbol{3}.\boldsymbol{3}$ rotated 90 degrees clockwise.}
 \label{Q(3.3)}
\[
\xymatrix@d{&&\boldsymbol{3}.\boldsymbol{3}\ar@{-}[dll]\ar@{-}[dl]\ar@{-}[dr]\ar@{-}[drr]&&\\
\mathfrak{c}(a\oplus 1)\ar@{-}[d]\ar@{-}[dr]&\mathfrak{c}(1\oplus a)\ar@{-}[dl]\ar@{-}[drr]&&\mathfrak{o}(1\oplus a)\ar@{-}[dll]\ar@{-}[dr]&\mathfrak{o}(a\oplus 1)\ar@{-}[dl]\ar@{-}[d]\\
\mathfrak{c}(a\oplus 1\vee 1\oplus a)\ar@{-}[drr]&\mathfrak{c}(a\oplus 1)\wedge\mathfrak{o}(1\oplus a)\ar@{-}[dr]&&\mathfrak{c}(1\oplus a)\wedge\mathfrak{o}(a\oplus 1)\ar@{-}[dl]&\mathfrak{o}(a\oplus a)\ar@{-}[dll]\\
&&\boldsymbol{1}.\boldsymbol{1}&&
}
\]

\end{figure} 
This example shows that the lattice of subbilocales of a biframe may not be distributive, like in this case where $\mathcal{S}(\boldsymbol{3}.\boldsymbol{3})$ contains isomorphic copies of $N_5$.

In \cite{S92}, the congruence biframe of a biframe $L$ is defined to be the biframe $\mathcal{C}L=(\mathcal{C}_0L_0,\mathcal{C}_1L_1,\mathcal{C}_2L_2)$ such that, for $i=1,2$, $(\mathcal{C}L)_i=\mathcal{C}_iL_i$ is the subframe of $\mathcal{C}L_0$ (the frame of congruences of $L_0$) generated, as a frame, by closed congruences of $L_i$ and open congruences of $L_k$ ($i\neq k$) and $\mathcal{C}_0L_0$ is generated, as a frame, by $\mathcal{C}_1L_1\cup\mathcal{C}_2L_2$. Then, for a biframe $L$, given the dual order-isomorphism between the frame $\mathcal{C}L_0$ of congruences of the ambient frame and its coframe $\mathcal{S}(L_0)$ of sublocales, one may wonder whether the ambient frame $\mathcal{C}_0L_0$ of the congruence biframe $\mathcal{C}L$ is dually isomorphic to its lattice $\mathcal{S}(L)$ of subbilocales. The example above shows that this is not the case  in general, since $\mathcal{S}(\boldsymbol{3}.\boldsymbol{3})$ is not a coframe. For instance, $\mathfrak{o}(1\oplus a\vee a\oplus 1)$ appears represented in the ambient frame of the congruence biframe of $\boldsymbol{3}.\boldsymbol{3}$ but it has no counterpart in $\mathcal{S}(\boldsymbol{3}.\boldsymbol{3})$.

\end{example}

\section{Functoriality of $\mathcal{S}$}\label{sectionfunctorQ}
We focus now on showing that the assignment $L\mapsto\mathcal{S}(L)$ applied to biframes is functorial. 
\begin{proposition}
Let $f\colon L\to M$ be a biframe homomorphism and $q\colon L\to S$ be an extremal epimorphism. Let $q^\prime\colon M_0\to T_0$ be the pushout, in $\mathsf{Frm}$, of $q$ along $f$. Then 
$q^\prime\colon M\to T$,
where $T_i=q^\prime[M_i]$ and $T=(T_0, T_1,T_2)$, is a extremal epimorphism in $\mathsf{BiFrm}$
\end{proposition}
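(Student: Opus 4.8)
The plan is to verify the characterisation of extremal epimorphisms in Proposition~\ref{extremalchar}: one must show that $q'$ restricts to extremal epimorphisms $q'_i\colon M_i\to T_i$ for $i=1,2$, and that $q'$, viewed as a frame homomorphism $M_0\to T_0$, is the pushout of $q'_1\oplus q'_2$ along $q_M$ in $\mathsf{Frm}$. The first condition is immediate. Since $q\colon L\to S$ is an extremal epimorphism in $\mathsf{BiFrm}$, its ambient component $q\colon L_0\to S_0$ is onto, and as pushouts in $\mathsf{Frm}$ preserve onto maps, $q'\colon M_0\to T_0$ is onto. Hence, writing $T_i=q'[M_i]$, each $T_i$ is a subframe of $T_0$ (as in the construction preceding Proposition~\ref{factor}), and since $q'$ is onto and $M_1\cup M_2$ is a subbasis of $M_0$, the union $T_1\cup T_2$ is a subbasis of $T_0$; thus $T=(T_0,T_1,T_2)$ is a biframe, $q'$ corestricts to onto frame homomorphisms $q'_i\colon M_i\to T_i$ with $q'\cdot e_{M_i}=e_{T_i}\cdot q'_i$, and $q'\colon M\to T$ is a biframe homomorphism whose components $q'_1,q'_2$ are extremal epimorphisms.

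It remains to establish the pushout condition. Write $f'\colon S_0\to T_0$ for the second leg of the given pushout, so that $q'\cdot f=f'\cdot q$. First I would record the auxiliary fact that $f'$ corestricts to $f'_i\colon S_i\to T_i$ with $q'_i\cdot f_i=f'_i\cdot q_i$ for $i=1,2$: since $f'\cdot q=q'\cdot f$, $S_i=q[L_i]$ and $f[L_i]\subseteq M_i$, we get $f'[S_i]=q'[f[L_i]]\subseteq q'[M_i]=T_i$, and precomposing $q'\cdot f=f'\cdot q$ with the inclusion $e_{L_i}$ and cancelling the monomorphism $e_{T_i}$ gives $q'_i\cdot f_i=f'_i\cdot q_i$. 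Next, since $q$ is an extremal epimorphism in $\mathsf{BiFrm}$, Proposition~\ref{extremalchar} gives that $q\colon L_0\to S_0$ is the pushout of $q_1\oplus q_2$ along $q_L$, say with second leg $q_S\colon S_1\oplus S_2\to S_0$; let $q_T\colon T_1\oplus T_2\to T_0$ be the canonical quotient of the biframe $T$, so $q_T\cdot\iota_{T_i}=e_{T_i}$ and, by a short computation with coproduct injections, $q'\cdot q_M=q_T\cdot(q'_1\oplus q'_2)$. To see that the commuting square with top $q_M$, left $q'_1\oplus q'_2$, bottom $q_T$ and right $q'$ is a pushout, take frame homomorphisms $n\colon M_0\to R$ and $m\colon T_1\oplus T_2\to R$ with $n\cdot q_M=m\cdot(q'_1\oplus q'_2)$. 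Using $q_M\cdot(f_1\oplus f_2)=f\cdot q_L$, the identity $q'_i\cdot f_i=f'_i\cdot q_i$ and the hypothesis, one verifies that $n\cdot f\colon L_0\to R$ and $m\cdot(f'_1\oplus f'_2)\colon S_1\oplus S_2\to R$ form a cocone over $q_L$ and $q_1\oplus q_2$; the universal property of $S_0$ then furnishes $\bar s\colon S_0\to R$ with $\bar s\cdot q=n\cdot f$, and since $n$ and $\bar s$ form a cocone over $f$ and $q$, the universal property of $T_0$ (the pushout of $q$ along $f$) furnishes a unique $l\colon T_0\to R$ with $l\cdot q'=n$. Finally, to check $l\cdot q_T=m$, note that since the $\iota_{T_i}$ are jointly epic and each $q'_i$ is epic it suffices to show $l\cdot q_T\cdot\iota_{T_i}\cdot q'_i=m\cdot\iota_{T_i}\cdot q'_i$, and rewriting $q_T\cdot\iota_{T_i}=e_{T_i}$, $e_{T_i}\cdot q'_i=q'\cdot e_{M_i}$ and $e_{M_i}=q_M\cdot\iota_{M_i}$ reduces this to the hypothesis $n\cdot q_M\cdot\iota_{M_i}=m\cdot(q'_1\oplus q'_2)\cdot\iota_{M_i}$. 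Uniqueness of $l$ is immediate from $q'$ being epic. Therefore $q'$ is the pushout of $q'_1\oplus q'_2$ along $q_M$, and by Proposition~\ref{extremalchar}, $q'\colon M\to T$ is an extremal epimorphism in $\mathsf{BiFrm}$.

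I expect the pushout verification to be the crux. The delicate part is the two-stage use of universal properties — first factoring $n\cdot f$ through $q$ by means of the pushout $S_0$, and then factoring through the pushout $T_0$ — followed by the coherence check $l\cdot q_T=m$. It is worth isolating and proving first the small lemma that the pushout square $q'\cdot f=f'\cdot q$ restricts on components to $q'_i\cdot f_i=f'_i\cdot q_i$, since this is precisely what makes the first stage go through.
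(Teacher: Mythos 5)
Your proposal is correct and follows essentially the same route as the paper's proof: establishing that $T$ is a biframe with $q'_i$ onto, recording the component identity $q'_i\cdot f_i=f'_i\cdot q_i$, and then verifying the pushout condition by the same two-stage appeal first to the pushout square coming from $q$ being extremal (Proposition~\ref{extremalchar}) and then to the defining pushout of $q$ along $f$, finishing with the coherence check against $q_T$ by cancelling epimorphisms. The only cosmetic difference is that you cancel $\iota_{T_i}\cdot q'_i$ componentwise where the paper cancels $q'_1\oplus q'_2$ at once.
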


\begin{proof}First of all, note that since $q$ is onto so is $q^\prime$ and, consequently, as $M_1\cup M_2$ forms a subbasis of $M_0$, $q^\prime[M_1]\cup q^\prime[M_2]=T_1\cup T_2$ forms a subbasis of $q^\prime[M_0]=T_0$. Therefore, $T=(T_0,T_1,T_2)$ is actually a biframe. Further denote by $f^\prime$ the pushout of $f$ along $q$. Since $q_i$ is onto, $f^\prime[S_i]=f^\prime[q[L_i]]=q^\prime[f[L_i]]\subseteq T_i$ for $i=1,2$.  Thus $f^\prime$ determines a biframe map from $S$ to $T$.

Now we will show that $q^\prime$  is the pushout of $q_1^\prime\oplus q_2^\prime$ along $q_M$ For this purpose, let $n\colon T_1\oplus T_2\to R$ and $m\colon M_0\to R$ be frame homomorphisms such that $m\cdot q_M=n\cdot q_1^\prime\oplus q_2^\prime$.

Now, we have
\[
\begin{aligned}
m\cdot f\cdot q_L&=m\cdot q_M\cdot f_1\oplus f_2\\
&=n\cdot q_1^\prime\oplus q_2^\prime\cdot f_1\oplus f_2\\
&=n\cdot f_1^\prime\oplus f_2^\prime \cdot q_1\oplus q_2.
\end{aligned}
\]
Since $q$ is an extremal epimorphism, by Proposition \ref{extremalchar},
\[
\xymatrix{
L_1\oplus L_2\ar[r]^{q_L}\ar[d]_{q_1\oplus q_2}&L_0\ar[d]^q\\
S_1\oplus S_2\ar[r]_{q_S}&S_0}
\]
is a pushout diagram and therefore
 there exists a frame homomorphism $p\colon S_0\to R$ such that  $n\cdot f_1^\prime\oplus f_2^\prime=p\cdot q_S$ and $m\cdot f=p\cdot q$. By the latter equality and the fact that 
 \[
 \xymatrix{
 L_0\ar[d]_q\ar[r]^f& M_0\ar[d]^{q^\prime}\\
 S_0\ar[r]_{f^\prime} & T_0
 }
 \]
is a pushout, there exists $r\colon T_0\to R$ such that $m=r\cdot q^\prime$ and $p=r\cdot f^\prime$. Finally, we have that
\[
\begin{aligned}
n\cdot q_1^\prime\oplus q_2^\prime&=m\cdot q_M\\
&=r\cdot q^\prime \cdot q_M\\
&=r\cdot q_T\cdot q_1^\prime\oplus q_2^\prime
\end{aligned}
\]
and since $q_1^\prime\oplus q_2^\prime$ is epic we conclude that $r\cdot q_T=n$.
\end{proof}

Taking into account this last result, we can generalize the functor $\mathcal{S}\colon \mathsf{Frm}\to \mathsf{coFrm}$ to a functor from $\mathsf{BiFrm}$ to the category $\mathsf{CLat}$ of complete lattices and lattice homomorphisms.
\begin{theorem} For biframes $L$ and $M$ and biframe homomorphism $f\colon L\to M$,
the assignments $L\mapsto \mathcal{S}(L)$ and $f\mapsto \mathcal{S}(f)$ where $\mathcal{S}(f)(\langle q\rangle)=\langle q^\prime\rangle$ with $q^\prime$ the pushout of $q$ along $f$ for $q\in\mathcal{S}(L)$, defines a functor $\mathcal{S}\colon\mathsf{BiFrm}\to\mathsf{CLat}$.
\end{theorem}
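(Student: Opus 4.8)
The plan is to verify, in order, that each $\mathcal{S}(f)$ is well defined and monotone, that the assignment obeys the two functor axioms, and finally that $\mathcal{S}(f)$ is a homomorphism of complete lattices.

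\emph{Well-definedness and monotonicity.} By the preceding proposition the pushout $q'$ along $f$ of an extremal epimorphism $q\colon L\to S$ in $\mathsf{BiFrm}$ is again an extremal epimorphism out of $M$, so $\langle q'\rangle\in\mathcal{S}(M)$. To see that $\langle q'\rangle$ depends only on $\langle q\rangle$, let $q_1,q_2$ be extremal epimorphisms out of $L$ with $q_1\lesssim q_2$, witnessed by $h$ with $h\cdot q_2=q_1$; forming the pushouts $q_1',q_2'$ along $f$ and invoking the universal property of $q_2'$ yields a biframe homomorphism $h'$ with $h'\cdot q_2'=q_1'$, where $h'$ restricts to the component frames automatically because $q_1'$ and $q_2'$ do so by construction. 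Thus $q_1'\lesssim q_2'$ and $\mathcal{S}(f)$ descends to a monotone map $\mathcal{S}(L)\to\mathcal{S}(M)$. The functor axioms are then formal: the pushout of an identity along any map is an identity, so $\mathcal{S}(\id_L)=\id$; and the pasting lemma for pushouts, together with the fact that at each stage the component frames are the images of the ambient quotient map, shows that the pushout of $q$ along $g\cdot f$ is the pushout along $g$ of its pushout along $f$, i.e.\ $\mathcal{S}(g\cdot f)=\mathcal{S}(g)\cdot\mathcal{S}(f)$.

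\emph{Meets and bounds.} The crucial remark is that, under the embeddings $\mathcal{S}(L)\hookrightarrow\mathcal{S}(L_0)$ and $\mathcal{S}(M)\hookrightarrow\mathcal{S}(M_0)$ of Corollary~\ref{meetgenerated}, $\mathcal{S}(f)$ is the restriction of the coframe homomorphism $\mathcal{S}(f_0)\colon\mathcal{S}(L_0)\to\mathcal{S}(M_0)$: by the construction of $q'$ its underlying frame map is the $\mathsf{Frm}$-pushout of $q_0$ along $f_0$, that is, $\mathcal{S}(f_0)(\langle q_0\rangle)$. Moreover $\mathcal{S}(L)$ is closed under arbitrary meets inside $\mathcal{S}(L_0)$: each element has the form $\mathcal{S}(e_{L_1})\langle h_1\rangle\wedge\mathcal{S}(e_{L_2})\langle h_2\rangle$, the sets $\mathcal{S}(e_{L_i})[\mathcal{S}(L_i)]$ are closed under arbitrary meets, and $\textstyle\bigwedge_k(a_k\wedge b_k)=\bigl(\bigwedge_k a_k\bigr)\wedge\bigl(\bigwedge_k b_k\bigr)$. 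Hence meets in $\mathcal{S}(L)$ coincide with those in $\mathcal{S}(L_0)$, and likewise for $M$, so $\mathcal{S}(f)$ --- being the restriction of a meet-preserving map whose restriction lands in $\mathcal{S}(M)$ --- preserves all meets; in particular the top, $\mathcal{S}(f)\langle\id_L\rangle=\langle\id_M\rangle$. The bottom of $\mathcal{S}(L)$ is $\langle L\to(\boldsymbol{1},\boldsymbol{1},\boldsymbol{1})\rangle$, whose pushout along $f$ is the bottom $\langle M\to(\boldsymbol{1},\boldsymbol{1},\boldsymbol{1})\rangle$ of $\mathcal{S}(M)$.

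\emph{Binary joins --- the main obstacle.} Since $\mathcal{S}(L)$ is a meet-closed subset of the coframe $\mathcal{S}(L_0)$, it is the fixed-point set of a closure operator $c_L\colon\mathcal{S}(L_0)\to\mathcal{S}(L)$, so that $x\vee^{\mathcal{S}(L)}y=c_L\bigl(x\vee^{\mathcal{S}(L_0)}y\bigr)$; one also checks $c_L=c_{A_1}\wedge c_{A_2}$, where $c_{A_i}=\mathcal{S}(e_{L_i})\circ(e_{L_i})^{\bullet}$ is the closure onto $A_i:=\mathcal{S}(e_{L_i})[\mathcal{S}(L_i)]$ and $(e_{L_i})^{\bullet}$ denotes the left adjoint of $\mathcal{S}(e_{L_i})$. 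Because $\mathcal{S}(f_0)$ preserves binary joins, the identity $\mathcal{S}(f)(x\vee^{\mathcal{S}(L)}y)=\mathcal{S}(f)(x)\vee^{\mathcal{S}(M)}\mathcal{S}(f)(y)$ reduces to the two compatibilities $\mathcal{S}(f_0)\circ c_{A_i}=c_{B_i}\circ\mathcal{S}(f_0)$; applying the functor $\mathcal{S}\colon\mathsf{Frm}\to\mathsf{coFrm}$ to the commuting squares $f_0\cdot e_{L_i}=e_{M_i}\cdot f_i$ rewrites them as the Beck--Chevalley-type identities $\mathcal{S}(f_i)\circ(e_{L_i})^{\bullet}=(e_{M_i})^{\bullet}\circ\mathcal{S}(f_0)$ between left adjoints. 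Establishing these is the heart of the matter: one inequality is formal, and for the converse I would argue directly via the ``component'' description of subbilocales supplied by the two propositions of Section~\ref{sectioninducedsubs}, tracing how the $(e_{L_i})^{\bullet}$-image of a sublocale of $L_0$ behaves under $\mathcal{S}(f_0)$. This is the step I expect to require the most care.
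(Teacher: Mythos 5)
Your handling of well-definedness, monotonicity and the two functor axioms is correct and matches (indeed, spells out) what the paper does: the paper's entire proof is the single sentence that preservation of identities and composition is easy to check, with well-definedness delegated to the proposition immediately preceding the theorem. Your meet argument is also sound and goes beyond the paper: $\mathcal{S}(f)$ really is the restriction of the coframe homomorphism $\mathcal{S}(f_0)$ under the identifications of Corollary~\ref{meetgenerated}, and $\mathcal{S}(L)$ is closed under arbitrary meets in $\mathcal{S}(L_0)$ because each $\mathcal{S}(e_{L_i})[\mathcal{S}(L_i)]$ is (as the paper notes in the remark following that corollary) and a meet of binary meets is a binary meet of meets; so preservation of all meets, and of top and bottom, follows.

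The gap is exactly where you flag it: preservation of binary joins. Since the morphisms of $\mathsf{CLat}$ are lattice homomorphisms, this is part of what the theorem asserts, and you do not establish it. Your reduction is legitimate as far as it goes --- $c_L=c_{A_1}\wedge c_{A_2}$, joins in $\mathcal{S}(L)$ are $c_L$ of joins in $\mathcal{S}(L_0)$, and the whole question collapses to the identities $\mathcal{S}(f_0)\circ c_{A_i}=c_{B_i}\circ\mathcal{S}(f_0)$, of which one inequality is formal --- but the other inequality, $\mathcal{S}(f_0)(c_{A_i}(z))\leq c_{B_i}(\mathcal{S}(f_0)(z))$, is precisely the nontrivial content and is nowhere proved; it is a genuine Beck--Chevalley-type condition that does not follow from applying $\mathcal{S}$ to the commuting squares $f_0\cdot e_{L_i}=e_{M_i}\cdot f_i$ alone. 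As written, then, your proof is incomplete. In fairness, the paper's own one-line proof does not address this verification either; you have at least isolated the one delicate point, but you have not discharged it, and until you do (for instance by the concrete computation with the component description of Section~\ref{sectioninducedsubs} that you propose) the claim that $\mathcal{S}(f)$ preserves binary joins remains unestablished.
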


\begin{proof}
It is easy to check that $\mathcal{S}$ preserves identity morphisms and composition.
\end{proof}

\section{The biframe version of Isbell's Density Theorem}\label{IsbellsDensity}

We finish this work by providing an analogue of Isbell's Density Theorem for biframes and we show that assigning to each biframe its smallest dense subbilocales is functorial if  we restrict the hom-sets to biframe homomorphism that satisfy a condition that generalizes the notion  of skeletal maps  to the bitopogical context.

\begin{definition}
We  will say that a biframe map $f\colon L\to M$ is \emph{dense} if the underlying frame homomorphism $f\colon L_0\to M_0$ is dense.
\end{definition}

\begin{lemma}\label{densepreservation}
Let $L$ be a biframe and $f\colon L_0\to S$ be an extremal epimorphism in $\mathsf{Frm}$. Then $f$  is dense iff and only if $\overline{f}$ is dense.
\end{lemma}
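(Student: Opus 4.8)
The equivalence splits into two implications, of which one is routine. For ``$f$ dense $\Rightarrow$ $\overline{f}$ dense'', recall from Proposition~\ref{factor} that $f$ factors as $L\xrightarrow{\overline{f}}(f(L),f[L_1],f[L_2])\xrightarrow{e}(S,f[L_1],f[L_2])$, so on ambient frames $f=e\cdot\overline{f}$ with $e$ a frame homomorphism. Hence if $\overline{f}(a)=0$ then $f(a)=e(\overline{f}(a))=e(0)=0$, and density of $f$ gives $a=0$; thus $\overline{f}$ is dense.

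The content is the converse. The plan is to prove that the comparison homomorphism $e\colon f(L)\to S$ is dense; granting this, $f=e\cdot\overline{f}$ is a composite of two dense frame homomorphisms, hence dense. To show $e$ is dense I would work with the concrete presentation $f(L)\cong f[L_1]\oplus f[L_2]/\mathcal{R}_f$ established earlier in this section, using that $\overline{q_L}$ is onto and that $e\cdot\overline{q_L}=g$, where $g\colon f[L_1]\oplus f[L_2]\to S$ is the homomorphism induced by the inclusions $e_i\colon f[L_i]\hookrightarrow S$ (so $g$ is onto, since $f[L_1]\cup f[L_2]$ generates $S$). Take $t\in f(L)$ with $e(t)=0$, choose $s\in f[L_1]\oplus f[L_2]$ with $\overline{q_L}(s)=t$, and write $s=\tbigvee_i f(x_i)\oplus f(y_i)$ with $x_i\in L_1$ and $y_i\in L_2$; this is possible because each $f_i$ is onto and every element of a frame coproduct is a join of basic elements. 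Then $0=e(t)=g(s)=\tbigvee_i f(x_i)\wedge f(y_i)$ in $S$, so $f(x_i\wedge y_i)=f(x_i)\wedge f(y_i)=0$ for every $i$. It remains to deduce that $(f(x_i)\oplus f(y_i),0)\in\mathcal{R}_f$ for every $i$, equivalently that $\overline{f}(x_i\wedge y_i)=0$ in $f(L)$; then $t=\overline{q_L}(s)=\tbigvee_i\overline{f}(x_i\wedge y_i)=0$, so $e$ is dense.

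The main obstacle is exactly this last deduction. The generating relations of $\mathcal{R}_f$ put $(f(x)\oplus f(y),0)$ into $\mathcal{R}_f$ only when $x\wedge y=0$ already holds in $L_0$, whereas here we know only that $f(x\wedge y)=0$ in $S$; closing this gap is where the hypothesis on $\overline{f}$ must enter. I expect to use that $\overline{f}$ dense forces the component maps $f_1,f_2$ to be dense (if $f_i(x)=0$ then $\overline{f}(e_{L_i}(x))$, being the image of $f_i(x)$ under the embedding $f[L_i]\hookrightarrow f(L)$, equals $0$, so $x=0$), together with the pushout characterization of $\overline{f}$ from Proposition~\ref{extremalchar}, to argue that a collapse $f(x\wedge y)=0$ occurring in $S$ is already witnessed modulo $\mathcal{R}_f$ inside $f[L_1]\oplus f[L_2]$. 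Verifying this---hence that $e$ is dense---is the technical heart of the argument.
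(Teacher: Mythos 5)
Your first implication and your diagnosis of where the difficulty lies are both on target: ``$f$ dense $\Rightarrow\overline{f}$ dense'' follows at once from $f=e\cdot\overline{f}$ on ambient frames, and this is exactly how the paper argues that direction. The genuine gap is the converse, which you explicitly leave open: you reduce it to the density of $e\colon f(L)\to S$, note that the generators of $\mathcal{R}_f$ only identify $f(x)\oplus f(y)$ with $0$ when $x\wedge y=0$ already holds in $L_0$ (whereas you only know $f(x\wedge y)=0$ in $S$), and defer the ``technical heart''. That missing step cannot be supplied, because the converse implication is false. Take $L=\boldsymbol{3}.\boldsymbol{3}$ from Section \ref{sectioninducedsubs} and $f=\mathfrak{c}(a\oplus a)\colon L_0\to{\uparrow}(a\oplus a)$, $x\mapsto x\vee(a\oplus a)$. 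Then $f$ is not dense, since $f(a\oplus a)$ is the bottom of ${\uparrow}(a\oplus a)$ while $a\oplus a\neq 0$ in $\boldsymbol{3}\oplus\boldsymbol{3}$. However, $f$ restricts to isomorphisms $f_i\colon L_i\to f[L_i]$ for $i=1,2$, so $f_1\oplus f_2$ is an isomorphism and hence so is its pushout $\overline{f}$ along $q_L$; in particular $\overline{f}$ is dense. (This is consistent with Figure \ref{Q(3.3)}, where $\mathfrak{c}(a\oplus a)$ does not appear as a subbilocale: $B_L$ sends it to the top.) Concretely, $t=f(a\oplus 1)\wedge f(1\oplus a)$ is nonzero in $f(L)\cong f[L_1]\oplus f[L_2]$, yet $e(t)=a\oplus a=0_S$, which is exactly the failure mode you anticipated for $e$.

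For comparison, the paper's own one-line justification of the converse (``the density of $f$ follows from the fact that $B_L$ is inflationary'') does not work either: inflationarity gives $f\lesssim\overline{f}$, i.e.\ a factorization $f=e\cdot\overline{f}$, and such a factorization transfers density from $f$ up to $\overline{f}$, not downwards. So only the implication you actually proved is available --- and fortunately it is the only one used later: the Isbell Density corollary needs precisely ``$\beta_{L_0}$ dense $\Rightarrow\overline{\beta_{L_0}}$ dense''. Your write-up becomes a complete and correct treatment if you keep your first paragraph and replace the attempted converse by a counterexample such as the one above (or restrict the statement to the single implication).
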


\begin{proof}
If $f$ is dense, then the density of $\overline{f}$ follows directly from the restriction of the extremal epi-mono factorization $f=e\cdot \overline{f}$ to the underlying ambient frames. If $\overline{f}$ is dense, the density of $f$ follows from the fact that $B_L$ is inflationary.
\end{proof}

The following is the main result in this section, a extension of the celebrated Isbell's Density Theorem to the context of biframes as pointfree bitopological spaces.

\begin{corollary}[Isbell's Density Theorem for biframes]
For any biframe $L$, $\langle\overline{\beta_{L_0}}\rangle$ is its least dense biframe subbilocale.
\end{corollary}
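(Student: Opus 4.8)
The plan is to derive this corollary by combining the classical Isbell Density Theorem for frames with the closure operator $B_L$ studied above, using Lemma~\ref{densepreservation} as the bridge. Recall that in $\mathsf{Frm}$, the map $\beta_{L_0}\colon L_0\to\mathfrak{B}(L_0)$ is the least dense extremal epimorphism, i.e.\ $\langle\beta_{L_0}\rangle$ is the bottom of the subposet of $\mathcal{S}(L_0)$ consisting of dense sublocales. Applying the construction $\overline{(\,\cdot\,)}$ to $\beta_{L_0}$ produces an extremal epimorphism $\overline{\beta_{L_0}}\colon L\to (\mathfrak{B}(L_0)(L),\beta_{L_0}[L_1],\beta_{L_0}[L_2])$ in $\mathsf{BiFrm}$, and by Lemma~\ref{densepreservation} it is dense (since $\beta_{L_0}$ is). So $\langle\overline{\beta_{L_0}}\rangle$ is a dense subbilocale of $L$; it remains to prove it is the least one.

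For minimality, let $g\colon L\to M$ be any dense extremal epimorphism in $\mathsf{BiFrm}$. Then the underlying frame map $g_0\colon L_0\to M_0$ is a dense extremal epimorphism in $\mathsf{Frm}$ (onto because $g$ is extremal epi in $\mathsf{BiFrm}$, dense by definition of a dense biframe map). By Isbell's theorem for frames, $\langle\beta_{L_0}\rangle\le\langle g_0\rangle$ in $\mathcal{S}(L_0)$. Since $B_L$ is monotone (the relevant Proposition above), $B_L\langle\beta_{L_0}\rangle\le B_L\langle g_0\rangle$, that is $\langle\overline{\beta_{L_0}}\rangle\le\langle\overline{g_0}\rangle$ in $\mathcal{S}(L_0)$. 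The key remaining point is to identify $\langle\overline{g_0}\rangle$ with $\langle g\rangle$ inside $\mathcal{S}(L)$ — more precisely, under the isomorphism of $\mathcal{S}(L)$ with the fixed points of $B_L$ on $\mathcal{S}(L_0)$ (Corollary~\ref{completequotients} and its proof), the class of the already-extremal-epi biframe map $g$ corresponds to $\langle\overline{g_0}\rangle$. Since $g$ is already an extremal epimorphism in $\mathsf{BiFrm}$, the factorization of Proposition~\ref{factor} applied to $g$ (viewed as a map into $(M_0,g[L_1],g[L_2])=M$) has $e$ an isomorphism, so $\overline{g}\simeq g$; and $\overline{g}$ is by construction exactly $\overline{g_0}$ in the sense of the operator $B_L$. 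Hence $\langle\overline{\beta_{L_0}}\rangle\le\langle g\rangle$ in $\mathcal{S}(L)$, completing the argument.

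The main obstacle I anticipate is the bookkeeping in the last step: being careful that the operator $B_L$ on $\mathcal{S}(L_0)$, the isomorphism $\mathcal{S}(L)\cong\mathrm{Fix}(B_L)$, and the notation $\overline{f}$ (defined both for biframe maps in Proposition~\ref{factor} and for frame quotients of $L_0$ in the $B_L$ construction) are all consistently matched up, so that ``$g$ extremal epi in $\mathsf{BiFrm}$'' really does translate to ``$\langle g_0\rangle$ is $B_L$-fixed and equals $\langle\overline{g_0}\rangle$.'' Once that identification is pinned down, monotonicity of $B_L$ plus the frame-level Isbell theorem give the result immediately; density of the candidate is handled by Lemma~\ref{densepreservation}, and the fact that it \emph{is} a subbilocale (i.e.\ that $\overline{\beta_{L_0}}$ is an extremal epi in $\mathsf{BiFrm}$) is built into the $B_L$ construction. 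No delicate new computation is needed beyond invoking these established facts in the right order.
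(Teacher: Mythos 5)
Your proposal is correct and follows essentially the same route as the paper's (very terse) proof: density of the candidate via Lemma~\ref{densepreservation}, and minimality via the frame-level Isbell theorem combined with monotonicity of $B_L$ and the identification of extremal epimorphisms in $\mathsf{BiFrm}$ with fixed points of $B_L$. Your write-up just makes explicit the bookkeeping that the paper leaves implicit.
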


\begin{proof}
This follows from Lemma \ref{densepreservation} and the fact that $B_L$ is monotone.
\end{proof}

In order to have more concrete description  of the  least dense subbilocale of a biframe  $L$, consider the frame congruence $\mathcal{I}$ of $L_0$ generated by the set 
\[
\{(a, a^{\ast\ast})\}_{a\in L_1\cup L_2}\subseteq L_0\times L_0.
\]

\begin{proposition}
Let $L$ be a biframe. The frame quotient $p\colon L_0\to L_0/\mathcal{I}$ is the pushout of $\beta_{L_0}|_{L_1}\oplus \beta_{L_0}|_{L_2}$ along $q_L$.
\end{proposition}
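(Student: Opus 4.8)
The plan is to verify the universal property of the pushout directly, using the characterization of extremal epimorphisms in $\mathsf{BiFrm}$ (Proposition~\ref{extremalchar}) together with the explicit description of $L_0/\mathcal{I}$. First I would observe that $\beta_{L_0}|_{L_i}\colon L_i\to\beta_{L_0}[L_i]$ is exactly the restriction of the Booleanization map to the component $L_i$, so the domain of the coproduct $\beta_{L_0}|_{L_1}\oplus\beta_{L_0}|_{L_2}$ is $\beta_{L_0}[L_1]\oplus\beta_{L_0}[L_2]$. I would then show that $p\colon L_0\to L_0/\mathcal{I}$, together with the map $q_L$-side data, fits into a commuting square over $q_L$ and $\beta_{L_0}|_{L_1}\oplus\beta_{L_0}|_{L_2}$; the key point here is that $p(a)=p(a^{\ast\ast})$ for every $a\in L_1\cup L_2$ by the very definition of $\mathcal{I}$, which is what makes the square commute on generators, and then joint epicness of the coproduct injections $\iota_{L_i}$ promotes this to a genuine equality of frame homomorphisms $L_1\oplus L_2\to L_0/\mathcal{I}$.

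The main work is then the universal property. Suppose $m\colon L_0\to N$ and $n\colon\beta_{L_0}[L_1]\oplus\beta_{L_0}[L_2]\to N$ are frame homomorphisms with $m\cdot q_L = n\cdot(\beta_{L_0}|_{L_1}\oplus\beta_{L_0}|_{L_2})$. I would aim to factor $m$ through $p$. For this it suffices to show $m(a)=m(a^{\ast\ast})$ for every $a\in L_1\cup L_2$, since then $m$ kills the generating relations of $\mathcal{I}$ and hence factors uniquely through the quotient as some $\tilde m\colon L_0/\mathcal{I}\to N$; the equation $\tilde m\cdot p = m$ together with $p$ being epic and the commutativity already established will force the coproduct-side factorization to agree, giving a unique mediating map. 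The computation $m(a)=m(a^{\ast\ast})$ for $a\in L_1$ is: writing $a\mapsto a\oplus 1$ under $\iota_{L_1}$, we have $m(a) = m(q_L(a\oplus 1)) = n((\beta_{L_0}|_{L_1}\oplus\beta_{L_0}|_{L_2})(a\oplus 1)) = n(a^{\ast\ast}\oplus 1)$, and similarly $m(a^{\ast\ast}) = n((a^{\ast\ast})^{\ast\ast}\oplus 1) = n(a^{\ast\ast}\oplus 1)$ since $(a^{\ast\ast})^{\ast\ast}=a^{\ast\ast}$; the case $a\in L_2$ is symmetric.

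The step I expect to be the main obstacle is the bookkeeping around the two notions of ``pushout'' at play: the statement asks for $p$ to be \emph{the} pushout of $\beta_{L_0}|_{L_1}\oplus\beta_{L_0}|_{L_2}$ along $q_L$ as a diagram in $\mathsf{Frm}$, and I must be careful that the frame-level pushout data (the two maps out of $L_0/\mathcal{I}$) are exactly $p$ and the induced map $\overline{q_L}$, and that uniqueness in the universal property is correctly handled on both legs simultaneously. Once the generator-level relation $m(a)=m(a^{\ast\ast})$ is in hand, the rest is the standard argument that a frame congruence quotient is the coequalizer/pushout it is built to be, combined with joint epicness of coproduct injections to pin down the coproduct-side leg; these are routine but need to be assembled in the right order. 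It is worth noting that, combined with Proposition~\ref{extremalchar}, this proposition also re-proves that $\overline{\beta_{L_0}}$ has ambient frame $L_0/\mathcal{I}$, giving the concrete description promised before the statement.
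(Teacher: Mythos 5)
Your proposal is correct and follows essentially the same route as the paper: commutativity of the square on generators via $p(a)=p(a^{\ast\ast})$ for $a\in L_1\cup L_2$ (the defining relations of $\mathcal{I}$), then the universal property by showing that any test map $m$ with $m\cdot q_L=n\cdot(\beta_{L_0}|_{L_1}\oplus\beta_{L_0}|_{L_2})$ identifies $a$ with $a^{\ast\ast}$ and hence factors through the quotient. The one small imprecision is at the end: agreement on the coproduct-side leg is forced by epicness of $\beta_{L_0}|_{L_1}\oplus\beta_{L_0}|_{L_2}$ (which is onto), not of $p$ --- epicness of $p$ is what gives uniqueness of the mediating map --- and this is exactly how the paper closes the argument.
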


\begin{proof}
First note that the map $q_i\colon \beta_{L_0}(L_i)\to L_0/\mathcal{I}$ be given  by $a^{\ast\ast}\mapsto \langle a^{\ast\ast}\rangle$ is a frame homomorphism. Therefore, there exists a frame homomorphism
\[
q\colon \beta_{L_0}[L_1]\oplus \beta_{L_0}[L_2]\to L_0/\mathcal{I}
\]
such that $q_i=q\cdot \iota_{\beta_{L_0}[L_i]}$. Therefore $q(a^{\ast\ast}\oplus 1)=\langle a^{\ast\ast}\rangle$ and $q(1\oplus b^{\ast\ast})=\langle b^{\ast\ast}\rangle$ for any $a\in L_1$ and $b\in L_2$. Consequently, we have 
\[
q\cdot(\beta_{L_0}|_{L_1}\oplus \beta_{L_0}|_{L_2})\cdot \iota_{L_i}(a)=\langle a^{\ast\ast}\rangle=p\cdot q_L\cdot \iota_{L_i}(a)
\]
for any $a\in L_i$. Thus $q\cdot(\beta_{L_0}|_{L_1}\oplus \beta_{L_0}|_{L_2})=p\cdot q_L$.

Now let $f\colon  L_0\to R$ and $g\colon \beta_{L_0}[L_1]\oplus \beta_{L_0}[L_2]\to R$ be frame homomorphisms such that
\[
f\cdot q_L=g\cdot (\beta_{L_0}|_{L_1}\oplus \beta_{L_0}|_{L_2}).
\]
 Then, for each $a\in L_i$, as clearly
\[
(\beta_{L_0}|_{L_1}\oplus \beta_{L_0}|_{L_2})\cdot\iota_{L_i}(a)=(\beta_{L_0}|_{L_1}\oplus \beta_{L_0}|_{L_2})\cdot\iota_{L_i}(a^{\ast\ast}),
\]
 we have
\[
\begin{aligned}
f(a)&=f(q_L\cdot\iota_{L_i}(a))\\
&=g\cdot (\beta_{L_0}|_{L_1}\oplus \beta_{L_0}|_{L_2})\cdot\iota_{L_i}(a)\\
&=g\cdot (\beta_{L_0}|_{L_1}\oplus \beta_{L_0}|_{L_2})\cdot\iota_{L_i}(a^{\ast\ast})\\
&=f(q_L\cdot\iota_{L_i}(a^{\ast\ast}))\\
&=f(a^{\ast\ast}).
\end{aligned}
\]
Consequently, there exists a frame homomorphism $h\colon L_0/\mathcal{I}\to R$ such that $h\cdot p=f$. Futher we have that 
\[
\begin{aligned}
g\cdot (\beta_{L_0}|_{L_1}\oplus \beta_{L_0}|_{L_2})&=f\cdot q_L\\
&=h\cdot p\cdot q_L\\
&=h\cdot q\cdot(\beta_{L_0}|_{L_1}\oplus \beta_{L_0}|_{L_2}).
\end{aligned}
\]
Finally, since $\beta_{L_0}|_{L_1}\oplus \beta_{L_0}|_{L_2}$ is epic, we conclude that $h\cdot q=g$.
\end{proof}

Given a biframe $L$, we can replace $L_0/\mathcal{I}$ in the construction above by $q_\ast[L_0/\mathcal{I}]\subseteq L_0$ where $q_\ast$ is the right adjoint of $q$. Simply note that, as $q$ is onto, its right adjoint $q_\ast$ is injective and consequently $L_0/\mathcal{I}$ is isomorphic to its image $q_\ast[L_0/\mathcal{I}]$.  Of course, this arises from the standard equivalence between \emph{sublocale homomorphisms}, that is, onto frame homomorphism, and \emph{sublocale sets}. Now we have a frame quotient
\[
\beta_L\colon L_0\to q_\ast[L_0/\mathcal{I}]
\]
where $\beta_L(a)=q_\ast(q(a))$ for all $a\in L_0$.
 Note that
 \[
 \beta_L(a)=q_\ast(q(a))=q_\ast(\langle a^{\ast\ast}\rangle)=a^{\ast\ast}
 \]
  for any $a\in L_1\cup L_2$. We can define a biframe
 \[
 \mathfrak{B} L=\left(q_\ast[L_0/\mathcal{I}], \beta_{L_0}[L_1], \beta_{L_0}[L_2]\right)
  \]
with the corresponding biframe map
\[
\beta_{L}\colon L\to \mathfrak{B} L
\]
as the least dense subbilocale  of $L$. Indeed, $\beta_L$ is the pushout of $\beta_L|_{L_1}\oplus \beta_L|_{L_2}$ along $q_L$.

Now we focus on how to restrict the homsets so that the construction above becomes functorial. For this purpose the following lemma will be useful.

\begin{lemma}\label{maprestriction}
Let $L$ and $M$ be biframes, $f\colon L_0\to S$ and $g\colon M_0\to R$ be frame extremal epimorphisms, and $h\colon L\to M$ be a biframe map. Let
\[
\overline{f}\colon L\to\widetilde{S}=(f(L),f[L_1],f[L_2])
\]
and
\[
 \overline{g}\colon M\to \widetilde{R}=(g(M),g[M_1],g[M_2])
\]
be the extremal epimorphisms induced by $f$ and $g$, respectively, as introduced in Section \ref{sectionlatq}. Then there exists a biframe map $\widehat{h}$ such that
\[
\xymatrix@R+1.pc@C+1.pc{
L_0\ar[r]^h\ar[d]_{\overline{f}} &M_0\ar[d]^{\overline{g}}\\
\widetilde{S}\ar[r]_{\widehat{h}}&\widetilde{R}
}
\]
commutes if and only if, for $i=1,2$, there exist $\widehat{h}_i$ such that the diagrams
\[
\xymatrix@R+1.pc@C+1.pc{
L_i\ar[r]^{h_i}\ar[d]_{f|_{L_i}} &M_i\ar[d]^{g|_{M_i}}\\
f[L_i]\ar[r]_{\widehat{h}_i}&g(M_i)
}
\]
commute.
\end{lemma}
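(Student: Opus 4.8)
The plan is to prove each direction by chasing the relevant commutative squares and using the orthogonality between extremal epimorphisms and monomorphisms in $\mathsf{Frm}$, together with the universal property of the pushouts defining $\widetilde{S}$ and $\widetilde{R}$ as in Proposition~\ref{extremalchar}. Throughout, recall that $\overline{f}$ has component frames $f[L_i]=\overline{f}[L_i]$, with $\overline{f}|_{L_i}\colon L_i\to f[L_i]$ an extremal epimorphism, and likewise for $\overline{g}$; and that $\overline{f}$ is the pushout of $(\overline{f}|_{L_1}\oplus\overline{f}|_{L_2})$ along $q_L$, with the analogous statement for $\overline{g}$ and $q_M$.

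For the forward direction, suppose $\widehat{h}\colon\widetilde{S}\to\widetilde{R}$ is a biframe map with $\widehat{h}\cdot\overline{f}=\overline{g}\cdot h$. Being a biframe map, $\widehat{h}$ restricts to frame homomorphisms $\widehat{h}_i\colon f[L_i]\to g[M_i]$, and these are the required maps: restricting the equation $\widehat{h}\cdot\overline{f}=\overline{g}\cdot h$ to the component frame $L_i$ (using that $\overline{f}|_{L_i}$ and $\overline{g}|_{M_i}$ are the restrictions of $\overline{f}$ and $\overline{g}$, and that $h$ restricts to $h_i$) yields $\widehat{h}_i\cdot(\overline{f}|_{L_i})=(\overline{g}|_{M_i})\cdot h_i$, i.e. the $i$-th square commutes.

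For the converse, suppose we are given $\widehat{h}_i\colon f[L_i]\to g[M_i]$ with $\widehat{h}_i\cdot(\overline{f}|_{L_i})=(\overline{g}|_{M_i})\cdot h_i$ for $i=1,2$. The idea is to build a frame homomorphism $\widehat{h}\colon f(L)\to g(M)$ via the pushout universal property of $\overline{f}$. Namely, consider the two maps $\overline{g}\cdot h\colon L_0\to g(M)$ and $(\widehat{h}_1\oplus\widehat{h}_2)\cdot(\overline{q_M}\text{-type map})$; more precisely, let $r\colon f[L_1]\oplus f[L_2]\to g(M)$ be the composite of $\widehat{h}_1\oplus\widehat{h}_2\colon f[L_1]\oplus f[L_2]\to g[M_1]\oplus g[M_2]$ with the canonical map $g[M_1]\oplus g[M_2]\to g(M)$. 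One checks that $r$ and $\overline{g}\cdot h$ agree on $q_L$, i.e. $r\cdot(\overline{f}|_{L_1}\oplus\overline{f}|_{L_2})=(\overline{g}\cdot h)\cdot q_L$: this is verified on each coproduct injection $\iota_{L_i}$ using the commuting $i$-th square together with the identities $e_i\cdot h_i=h\cdot e_{L_i}$ and the factorization of $\overline{g}$ through $q_M$. Since $\overline{f}$ is the pushout of $(\overline{f}|_{L_1}\oplus\overline{f}|_{L_2})$ along $q_L$, there is a unique frame homomorphism $\widehat{h}\colon f(L)\to g(M)$ with $\widehat{h}\cdot\overline{f}=\overline{g}\cdot h$ and $\widehat{h}\cdot\overline{q_L}=r$; the latter identity shows $\widehat{h}$ maps $f[L_i]$ into $g[M_i]$ and restricts there to $\widehat{h}_i$, so $\widehat{h}$ is a biframe map $\widetilde{S}\to\widetilde{R}$, and $\widehat{h}\cdot\overline{f}=\overline{g}\cdot h$ is exactly the required square.

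The main obstacle is the bookkeeping in the converse direction: assembling $r$ correctly from the $\widehat{h}_i$ and verifying the compatibility $r\cdot(\overline{f}|_{L_1}\oplus\overline{f}|_{L_2})=(\overline{g}\cdot h)\cdot q_L$ so that the pushout property of $\overline{f}$ applies. This is a routine but somewhat fiddly diagram chase through the coproduct injections, the component-frame inclusions $e_{L_i}$, $e_i$, and the quotient maps $q_L$, $q_M$; no conceptual difficulty arises, only the need to keep the many maps straight. Everything else follows from Proposition~\ref{extremalchar} and the fact that biframe maps are, by definition, the frame maps that restrict appropriately to the component frames.
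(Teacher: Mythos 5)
Your proposal is correct and follows essentially the same route as the paper: the forward direction by restriction, and the converse by assembling $r=\overline{q_M}\cdot(\widehat{h}_1\oplus\widehat{h}_2)$, verifying the compatibility $\overline{g}\cdot h\cdot q_L=r\cdot(f|_{L_1}\oplus f|_{L_2})$ on the coproduct injections, and invoking the pushout property of $\overline{f}$ from Proposition~\ref{extremalchar} to produce $\widehat{h}$ and check it restricts to the $\widehat{h}_i$. The mention of epi--mono orthogonality in your plan is not actually needed (and the paper does not use it here); the pushout universal property does all the work.
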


\begin{proof} In order to check sufficiency, for $i=1,2$, let $\widehat{h}_i\colon f[L_i]\to g(M_i)$ be frame homomorphisms satisfying
\[
g|_{M_i}\cdot h_i=\widehat{h}_i\cdot f|_{L_i}.
\]
Then we have that
\[
\begin{aligned}
\overline{g}\cdot h\cdot q_L\cdot \iota_{L_i}& =\overline{g}\cdot q_M\cdot \iota_{M_i}\cdot h_i\\
&=\overline{q_M}\cdot (g|_{M_1}\oplus g|_{M_2})\cdot\iota_{M_i}\cdot h_i\\
&= \overline{q_M}\cdot \iota_{g(M_i)}\cdot g|_{M_i}\cdot h_i\\
&=\overline{q_M}\cdot \iota_{g(M_i)}\cdot \widehat{h}_i\cdot f|_{L_i},
\end{aligned}
\]
where $\overline{q_M}$ is the pushout of $q_M$ along $g|_{M_1}\oplus g|_{M_2}$.
It follows that
\[
\begin{aligned}
\overline{g}\cdot h\cdot q_L\cdot \iota_{L_i}&=\overline{q_M}\cdot (\widehat{h}_1\oplus\widehat{h}_2)\cdot \iota_{f[L_i]}\cdot f|_{L_i}\\
&=\overline{q_M}\cdot (\widehat{h}_1\oplus\widehat{h}_2)\cdot (f|_{L_1}\oplus f|_{L_2})\oplus\iota_{L_i}.
\end{aligned}
\]
As coproduct injections are jointly epic, we conclude that 
\[
\overline{g}\cdot h\cdot q_L=\overline{q_M}\cdot(\widehat{h}_1\oplus\widehat{h}_2)\cdot (f|_{L_1}\oplus f|_{L_2}).
\]

Consequently, there exists a unique frame map $\widehat{h}$ making 
\[
\xymatrix@R+1.pc@C+1.pc{
L_1\oplus L_2\ar[r]^{q_L}\ar[d]_{f|_{L_1}\oplus f_|{L_2}}&L_0\ar[d]^{\overline{f}}\ar@/^1.5pc/[ddr]^{\overline{g}\cdot h}&\\
f[L_1]\oplus f[L_2]\ar[r]_{\overline{q_L}}\ar[d]_{\widehat{h}_1\oplus\widehat{h}_2}&f(L)\ar@{.>}[dr]_{\widehat{h}}&\\
g(M_1)\oplus g(M_2)\ar[rr]_{\overline{q_M}}&&g(M)}
\]
commute, where the inner square is a pushout diagram. Therefore
\[
\begin{aligned}
\widehat{h}\cdot e_{f[L_i]}&=\widehat{h}\cdot \overline{q_{L}}\cdot \iota_{f[L_i]}\\
&=\overline{q_M}\cdot (\widehat{h}_1\oplus\widehat{h}_2)\cdot \iota_{f[L_i]}\\
&=\overline{q_M}\cdot\iota_{g(M_i)}\cdot\widehat{h}_i\\
&=e_{g(M_i)}\cdot\widehat{h}_i.
\end{aligned}
\]
We conclude that $\widehat{h}$ is biframe map $\widetilde{S}\to\widetilde{R}$. Finally $\overline{g}\cdot h=\widehat{h}\cdot \overline{f}$.

Necessity follows easily by restricting $\widehat{h}$ to the component frames.
\end{proof}

The following theorem is an extension to the context of biframes of a result by Banaschewski and Pultr in \cite{BP96}.

\begin{theorem}
Let $f\colon L\to M$ be a biframe map. There exists a biframe map $\hat{f}$ such that  
\[
\xymatrix@R+1.pc@C+1.pc{
L\ar[d]_{\beta_{L}}\ar[r]^{f} & M\ar[d]^{\beta_{M}}\\
\mathfrak{B}{L} \ar[r]_{\hat{f}}&\mathfrak{B}{M}
} 
\]
commutes if and only if $f(a^{\ast\ast})\leq f(a)^{\ast\ast}$ for all $a\in L_1\cup L_2$ (where the pseudocomplements are computed in the corresponding ambient frames).
\end{theorem}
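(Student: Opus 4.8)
The plan is to use Lemma~\ref{maprestriction} to reduce the existence of $\hat f$ to componentwise conditions, and then to unravel what those conditions say in terms of the explicit description of $\beta_L$ as $\beta_L(a)=q_\ast(q(a))$ together with $\beta_L|_{L_i}=\beta_{L_0}|_{L_i}$, which sends $a$ to $a^{\ast\ast}$ for $a\in L_1\cup L_2$. Since $\beta_L$ is the $\overline{(\cdot)}$-lift of the frame extremal epimorphism $\beta_{L_0}\colon L_0\to\mathfrak B(L_0)$ (post-composed with the sublocale-set identification, which is an isomorphism on the component frames), Lemma~\ref{maprestriction} applies verbatim with $f:=\beta_{L_0}$, $g:=\beta_{M_0}$, $h:=f$ (the given biframe map), $\widetilde S:=\mathfrak B L$ and $\widetilde R:=\mathfrak B M$. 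Thus a biframe map $\hat f\colon\mathfrak B L\to\mathfrak B M$ with $\beta_M\cdot f=\hat f\cdot\beta_L$ exists \emph{iff} for $i=1,2$ there exist frame homomorphisms $\hat f_i\colon\beta_{L_0}[L_i]\to\beta_{M_0}[M_i]$ with $\hat f_i\cdot(\beta_{L_0}|_{L_i})=(\beta_{M_0}|_{M_i})\cdot f_i$.

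Next I would analyze each of these two square-completion problems separately. Fix $i$ and write $\beta=\beta_{L_0}$, $\beta'=\beta_{M_0}$. The map $\beta|_{L_i}\colon L_i\to\beta[L_i]$ is an onto frame homomorphism, i.e.\ an extremal epimorphism in $\Frm$, so the lifting $\hat f_i$ exists iff $\beta'|_{M_i}\cdot f_i$ coequalizes the kernel pair of $\beta|_{L_i}$ — equivalently, iff $\beta(a)=\beta(b)$ in $L_0$ implies $\beta'(f_i(a))=\beta'(f_i(b))$ in $M_0$ for $a,b\in L_i$. For $a\in L_i$ one has $\beta(a)=\beta(a^{\ast\ast})$ (since $a^{\ast\ast}$ is regular), and conversely $\beta(a)=\beta(b)$ forces $a^{\ast\ast}=b^{\ast\ast}$ because $\beta$ is injective on regular elements; so the kernel pair of $\beta|_{L_i}$ is exactly $\{(a,b)\in L_i\times L_i : a^{\ast\ast}=b^{\ast\ast}\}$. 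Hence $\hat f_i$ exists iff: $a^{\ast\ast}=b^{\ast\ast}$ (pseudocomplements in $L_0$) implies $f_i(a)^{\ast\ast}=f_i(b)^{\ast\ast}$ (in $M_0$), for all $a,b\in L_i$. Since $a^{\ast\ast}=b^{\ast\ast}$ iff $a^{\ast\ast}=(a\vee b)^{\ast\ast}=b^{\ast\ast}$, this condition is equivalent (replacing $b$ by $a\vee b$ and using monotonicity) to: $a^{\ast\ast}=(a\vee b)^{\ast\ast}\Rightarrow f(a)^{\ast\ast}=(f(a)\vee f(b))^{\ast\ast}$; taking $b=a^{\ast\ast}$ (which lies in $L_i$ since $L_i$ is a subframe closed under its own operations? — careful here) this should collapse precisely to $f(a^{\ast\ast})\le f(a)^{\ast\ast}$. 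More cleanly: the implication for all pairs is equivalent to its instance $b=a^{\ast\ast}$ (noting $a^{\ast\ast}{}^{\ast\ast}=a^{\ast\ast}$), which reads $f(a^{\ast\ast})^{\ast\ast}\le f(a)^{\ast\ast}$, hence (as $f(a)^{\ast\ast}$ is regular) $f(a^{\ast\ast})\le f(a)^{\ast\ast}$; and this single inequality conversely delivers the full implication by a short monotonicity argument. Running this for $i=1$ and $i=2$ and taking the union gives the stated condition over all $a\in L_1\cup L_2$.

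The main obstacle I anticipate is the bookkeeping around pseudocomplements in a subframe versus the ambient frame: the theorem explicitly stipulates that all pseudocomplements are taken in the ambient frames $L_0$ and $M_0$, and one must be careful that $a^{\ast\ast}$ (computed in $L_0$) need not lie in $L_i$, so the instance "$b=a^{\ast\ast}$" above is not literally available inside $L_i$. The fix is to not work inside $L_i$ at all but to phrase the lifting criterion for $\hat f_i$ directly via the kernel pair of $\beta|_{L_i}$ as a relation on $L_i$, and to compare $a,b\in L_i$ only through their images $a^{\ast\ast},b^{\ast\ast}\in\mathfrak B(L_0)$; the equivalence "$a^{\ast\ast}=b^{\ast\ast}$ iff $(a\le b^{\ast\ast}$ and $b\le a^{\ast\ast})$" keeps everything in terms of the order of $L_0$. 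A secondary (routine) point is checking that the two $\hat f_i$ assemble into a genuine biframe map and that the resulting $\hat f$ is independent of choices — but that is exactly what the "sufficiency" half of Lemma~\ref{maprestriction} already packages, so once the componentwise criteria are identified with $f(a^{\ast\ast})\le f(a)^{\ast\ast}$, both directions follow immediately from the lemma.
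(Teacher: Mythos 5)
Your reduction via Lemma~\ref{maprestriction} together with the universal property of frame quotients is sound for one direction. Writing (B) for the stated condition $f(a^{\ast\ast})\le f(a)^{\ast\ast}$ and (A) for your componentwise condition (for $a,b\in L_i$: $a^{\ast\ast}=b^{\ast\ast}$ in $L_0$ implies $f(a)^{\ast\ast}=f(b)^{\ast\ast}$ in $M_0$), the implication (B)$\Rightarrow$(A) is the short monotonicity argument you describe ($a\le b^{\ast\ast}$ gives $f(a)\le f(b^{\ast\ast})\le f(b)^{\ast\ast}$, and symmetrically), so the $\hat f_i$ exist and the lemma assembles them into $\hat f$. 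This is in fact a cleaner route to sufficiency than the paper's, which verifies by hand that $a^{\ast\ast}\mapsto f(a)^{\ast\ast}$ preserves joins.

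The gap is in necessity. You correctly reduce the existence of $\hat f$ to (A), but your passage from (A) to (B) rests on instantiating (A) at the pair $(a,a^{\ast\ast})$, and, as you yourself flag, $a^{\ast\ast}$ is computed in $L_0$ and need not lie in $L_i$, so that instance is not available. Your proposed fix (rephrasing the kernel pair of $\beta|_{L_i}$ in terms of the order of $L_0$) merely restates (A); it never produces the inequality $f(a^{\ast\ast})\le f(a)^{\ast\ast}$, which constrains the ambient map $f_0$ at $a^{\ast\ast}=\bigvee_j(c_j\wedge d_j)$, a join of mixed meets that the component data together with (A) do not visibly control (one has $c_j\wedge d_j\le a^{\ast\ast}$ but neither $c_j\le a^{\ast\ast}$ nor $d_j\le a^{\ast\ast}$ individually, so (A) cannot be applied to the generators). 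The implication (A)$\Rightarrow$(B) is true, but the proof must pass back through the ambient frame: from the $\hat f_i$, Lemma~\ref{maprestriction} gives $\hat f$ with $\hat f\cdot\beta_L=\beta_M\cdot f$ on all of $L_0$; evaluating at $a^{\ast\ast}\in L_0$ and using that $\beta_L(a^{\ast\ast})=\beta_L(a)=a^{\ast\ast}$ (since $(a,a^{\ast\ast})$ is one of the generators of the congruence $\mathcal{I}$) together with $\beta_M\ge\mathrm{id}$ yields $f(a^{\ast\ast})\le\beta_M(f(a^{\ast\ast}))=\hat f(\beta_L(a^{\ast\ast}))=\hat f(a^{\ast\ast})=f(a)^{\ast\ast}$. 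This is essentially the paper's necessity argument, and it uses exactly the ambient-frame information that your purely componentwise reduction discards.
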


\begin{proof} For sufficiency, let $\widehat{f}_i\colon \beta_{L_0}[L_i]\to\beta_{M_0}[M_i]$ be defined as 
\[
\widehat{f}_i(a)=f(a)^{\ast\ast}
\]
 for all $a\in \beta_{L_0}[L_i]$. We need to check that $\widehat{f}_i$ is a frame homomorphism. Note first that, since $L_i$ is a subframe of $L_0$, $\beta_{L_0}[L_i]$ is a subframe of $\mathfrak{B}L_0$. In order to check that $\widehat{f}_i$ preserves finite infima, note that
\[
\begin{aligned}
\widehat{f}_i(a\wedge b)&=f(a\wedge b)^{\ast\ast}\\
&=(f(a)\wedge f(b))^{\ast\ast}\\
&=f(a)^{\ast\ast}\wedge f(b)^{\ast\ast}\\
&=\widehat{f}_i(a)\wedge \widehat{f}_i(b)
\end{aligned}
\]
for any $a,b\in \beta_{L_0}[L_i]~.$ To check that $\widehat{f}_i$ preserves suprema, let $A\subseteq \beta_{L_0}[L_i]$. We have that
\[
\begin{aligned}
\widehat{f}_i\left(\tbigvee^{\beta_{L_0}[L_i]}A\right)&=\widehat{f}_i\left(\tbigvee^{\mathfrak{B}L_0} A\right)\\
&=f\left(\left(\tbigvee^{L_0}A\right)^{\ast\ast}\right)^{\ast\ast}.
\end{aligned}
\]
Thus, if $f(a^{\ast\ast})\leq f(a)^{\ast\ast}$ holds for all $a\in L_i$, we have
\[
\widehat{f}_i\left(\tbigvee^{\beta_{L_0}[L_i]}A\right)\leq f\left(\tbigvee^{L_0}A\right)^{\ast\ast\ast\ast}
= f\left(\tbigvee^{L_0}A\right)^{\ast\ast}.
\]
Furthermore
\[
\begin{aligned}
\tbigvee_{a\in A}^{\beta_{M_0}[M_i]}\widehat{f}_i(a)&=\tbigvee_{a\in A}^{\mathfrak{B}M_0}f(a)^{\ast\ast}\\
&=\left(\tbigvee^{M_0}_{a\in A}f(a)^{\ast\ast}\right)^{\ast\ast}\\
&=\left(\tbigvee^{M_0}_{a\in A}f(a)\right)^{\ast\ast}\\
&= f\left(\tbigvee^{L_0}A\right)^{\ast\ast}.
\end{aligned}
\]
We conclude that
\[
\widehat{f}_i\left(\tbigvee^{\beta_{L_0}[L_i]}A\right)\leq \tbigvee_{a\in A}^{\beta_{M_0}[M_i]}\widehat{f}_i(a)
\]
and as $\widehat{f}_i$ is clearly monotone, we have that
\[
\widehat{f}_i\left(\tbigvee^{\beta_{L_0}[L_i]}A\right)= \tbigvee_{a\in A}^{\beta_{M_0}[M_i]}\widehat{f}_i(a).
\]

Finally, as $f(a^{\ast\ast})\leq f(a)^{\ast\ast}$ for any $a\in L_i$, we have that
\[
f(a^{\ast\ast})\leq f(a^{\ast\ast})^{\ast\ast}\leq f(a)^{\ast\ast\ast\ast}=f(a^{\ast\ast}).
\] Therefore 
\[
\widehat{f}_i(\beta_{L}(a))=f(a^{\ast\ast})^{\ast\ast}= f(a)^{\ast\ast}=\beta_{M}(f(a)).
\]
We conclude that $\widehat{f}_i\cdot \beta_{L}=\beta_{M}\cdot f$. By Lemma \ref{maprestriction},there exists a biframe map $\hat{f}$ making
\[
\xymatrix@R+1.pc@C+1.pc{
L\ar[d]_{\beta_{L}}\ar[r]^{f} & M\ar[d]^{\beta_{M}}\\
\mathfrak{B}{L} \ar[r]_{\hat{f}}&\mathfrak{B}{M}
} 
\]
commute.

For necessity, if there exists $\widehat{f}$ such that $\widehat{f}\cdot \beta_{L}=\beta_{M}\cdot f$, for any $a\in L_i$, we have
\[
\widehat{f}\cdot \beta_{L}(a)=\widehat{f}(a^{\ast\ast})=f(a)^{\ast\ast}=\beta_{M}\cdot f(a).
\]
Hence also $\widehat{f}(a^{\ast\ast})=\widehat{f}(a^{\ast\ast\ast\ast})=f(a^{\ast\ast})^{\ast\ast}$. Thus finally $f(a^{\ast\ast})\leq f(a^{\ast\ast})^{\ast\ast}=f(a)^{\ast\ast}$.
\end{proof}

\end{document}